\definecolor{webgreen}{rgb}{0,.5,0}
\definecolor{webbrown}{rgb}{.6,0,0}
\numberwithin{equation}{section}
\begin{document}

\theoremstyle{plain}
\newtheorem{theorem}{Theorem}
\newtheorem{corollary}[theorem]{Corollary}
\newtheorem{lemma}[theorem]{Lemma}
\newtheorem{remark}{Remark}
\newtheorem{example}{Example}

\begin{center}
{\LARGE\bf
On Some Series Involving the Binomial Coefficients $\binom{3n}{n}$ \\}

\vskip 1.4cm

{\large

Kunle Adegoke \\
Department of Physics and Engineering Physics \\ Obafemi Awolowo University\\
Ile-Ife, Nigeria \\
\href{mailto:adegoke00@gmail.com}{\tt adegoke00@gmail.com}

\vskip 0.25 in

Robert Frontczak\\
Independent Researcher\\
Reutlingen, Germany \\
\href{mailto:robert.frontczak@web.de}{\tt robert.frontczak@web.de}

\vskip 0.25 in

Taras Goy  \\
Faculty of Mathematics and Computer Science\\
Vasyl Stefanyk Precarpathian National University\\
Ivano-Frankivsk, Ukraine\\
\href{mailto:taras.goy@pnu.edu.ua}{\tt taras.goy@pnu.edu.ua}}

\end{center}

\vskip .25 in
\begin{center}
	{\it Dedicated to the memory of Professor Khristo N. Boyadzhiev \\ who passed away on June 28, 2023, and who loved infinite series.}
\end{center}
\vskip .25 in
\begin{abstract}
Using a simple transformation, we obtain much simpler forms for some series involving binomial coefficients $\binom{3n}n$ derived by Necdet Batir. New evaluations are given; and connections with Fibonacci numbers and the golden ratio are established. Finally, we derive some Fibonacci and Lucas series involving the reciprocals of $\binom{3n}n$.
\vskip 10pt
\noindent\textit{2020 Mathematics Subject Classification}: 40A05, 11B65, 11B39.
\vskip 5 pt
\noindent\textit{Keywords}: series; binomial coefficient; binomial sum; Fibonacci number; Lucas number. 
\end{abstract}

\newpage

\section{Introduction}
In an article published in the year 2005,  Batir \cite{batir03}, inspired by the results of Lehmer \cite{lehmer85}, studied the series $\sum\limits_{k = 1}^\infty  \frac{z^k}{k^{n}\binom{3n}n}$, giving particular attention to the special cases $n\in \mathbb{N}\cup\{0\}$, for which he derived explicit closed formulas. He obtained many interesting formulas by evaluating the closed forms at appropriate arguments. Some of his results had earlier been obtained experimentally by Borwein and Girgensohn \cite{borwein05}. In \cite{DAurizio}, D'Aurizio and Di~Trani studied this kind of series using hypergeometric functions.
In the recent paper \cite{chu}, Chu evaluated many series having the form $\sum\limits_{k = 1}^\infty  \frac{z^k}{k^{a+1}\binom{3n+b}n}$, where $a\in\{0;\pm1;\pm2\}$ and $b\in\{0;1;\pm2\}$. 

The purpose of this note is to derive equivalent but much simpler expressions for the special cases and thereby obtain new evaluations.

Batir \cite[Identity (3.1)]{batir03} showed, for $|z|\leq{27}/{4}$, that
\begin{equation} \label{Batir}
\sum_{k = 1}^\infty  \frac{z^k}{k^2 \binom{3k}k}  = 6\arctan ^2\!\bigg( {\frac{\sqrt 3 }{{2\phi(z)  - 1}}} \bigg) - \frac{1}{2}\log^2\!\bigg( \frac{{\phi^3(z)  + 1}}{\big(\phi(z) + 1\big)^3 } \bigg),
\end{equation}
where
\begin{equation}\label{phi}
\phi (z) = \sqrt[3]{\frac{{27 - 2z + 3\sqrt{81 - 12z} }}{2z}}.
\end{equation}

At $z={27}/{4}$, $z={20}/{3}$, $z={77}/{12}$, $z=6$, $z={65}/{12}$, $z={14}/{3}$,  $z={15}/{4}$,  $z={8}/{3}$, and $z={17}/{12}$ (then the expression $81-12z$ in \eqref{phi} will be a perfect square), from \eqref{Batir} we immediately obtain, respectively, such series: 
\begin{align}
\sum_{k=1}^\infty \frac{\left(\frac{27}{4}\right)^k}{k^2\binom{3k}{k}}&= \frac{2\pi^2}{3}-2\log^2 2,\label{27/4}\\
\sum_{k=1}^\infty \frac{\left(\frac{20}{3}\right)^k}{k^2\binom{3k}{k}}&=6\arctan^2\!\bigg(\frac{\sqrt3}{\sqrt[3]{10}-1}\bigg)-\frac12\log^2\!\bigg(\frac{18}{\big(\sqrt[3]{10}+2\big)^3}\bigg) ,\nonumber\\
\sum_{k=1}^\infty \frac{\left(\frac{77}{12}\right)^k}{k^2\binom{3k}{k}}&=6\arctan^2\!\bigg(\frac{7\sqrt3}{2\sqrt[3]{539}-7}\bigg)-\frac12\log^2\!\bigg(\frac{882}{\big(\sqrt[3]{539}+7\big)^3}\bigg),\nonumber\\
\sum_{k=1}^\infty\frac{6^k}{k^2\binom{3k}{k}}&= 6\arctan^2\!\bigg(\frac{\sqrt3}{2\sqrt[3]{2}-1}\bigg)-\frac12\log^2\!\bigg(\frac{3}{\big(\sqrt[3]{2}+1\big)^3}\bigg),\label{6}\\
\sum_{k=1}^\infty \frac{\left(\frac{65}{12}\right)^k}{k^2\binom{3k}{k}}&= 6\arctan^2\!\bigg(\frac{5\sqrt3}{2\sqrt[3]{325}-5}\bigg)-\frac12\log^2\!\bigg(\frac{450}{\big(\sqrt[3]{325}+5\big)^3}\bigg),\nonumber\\
\sum_{k=1}^\infty \frac{\left(\frac{14}{3}\right)^k}{k^2\binom{3k}{k}}&= 6\arctan^2\!\bigg(\frac{\sqrt3}{\sqrt[3]{28}-1}\bigg)-\frac12\log^2\!\bigg(\frac{36}{\big(\sqrt[3]{28}+2\big)^3}\bigg),\nonumber\\
\sum_{k=1}^\infty \frac{\left(\frac{15}{4}\right)^k}{k^2\binom{3k}{k}}&= 6\arctan^2\!\bigg(\frac{\sqrt3}{2\sqrt[3]{5}-1}\bigg)-\frac12\log^2\!\bigg(\frac{6}{\big(\sqrt[3]{5}+1\big)^3}\bigg),\nonumber
\end{align}
\begin{align}
\sum_{k=1}^\infty \frac{\left(\frac{8}{3}\right)^k}{k^2\binom{3k}{k}}&= \frac{\pi^2}{6}-\frac{\log^2 3}2,\label{Italy}\\
\sum_{k=1}^\infty \frac{\left(\frac{17}{12}\right)^k}{k^2\binom{3k}{k}}&= 6\arctan^2\!\bigg(\frac{\sqrt3}{2\sqrt[3]{17}-1}\bigg)-\frac12\log^2\!\bigg(\frac{18}{\big(\sqrt[3]{17}+1\big)^3}\bigg).\nonumber
\end{align}

Series \eqref{27/4} and \eqref{6} one  can find in \cite[Identities (3.4) and (3.5)]{batir03}. Series \eqref{Italy} was obtained by D'Aurizio and Di Trani using the hypergeometric function ${}_4F_3$ \cite[Formula (8)]{DAurizio}.  

Similarly, we find the corresponding alternating series:
\begin{align*}
\sum_{k=1}^\infty \frac{\left(-\frac{27}{4}\right)^k}{k^2\binom{3k}{k}}&= 6\arctan^2\!\bigg(\frac{\sqrt3}{2\sqrt[3]{3+2\sqrt2}+1}\bigg)-\frac12 \log^2\!\bigg(\frac{2+2\sqrt2}{\big(\sqrt[3]{3+2\sqrt2}-1\big)^3}\bigg),\\
\sum_{k=1}^\infty  \frac{\left(-\frac{20}{3}\right)^k}{k^2\binom{3k}{k}}&=6\arctan^2\!\bigg(\frac{\sqrt3\sqrt[3]{40}}{2\sqrt[3]{121+9\sqrt{161}}+\sqrt[3]{40}}\bigg)-\frac12 \log^2\!\bigg(\frac{81+9\sqrt{161}}{\big(\sqrt[3]{121+9\sqrt{161}}-\sqrt[3]{40}\big)^3}\bigg),\\
\sum_{k=1}^\infty  \frac{\left(-\frac{77}{12}\right)^k}{k^2\binom{3k}{k}}&= 6\arctan^2\!\bigg(\frac{\sqrt3\sqrt[3]{77}}{2\sqrt[3]{239+36\sqrt{158}}+\sqrt[3]{77}}\bigg)-\frac12 \log^2\!\bigg(\frac{162+18\sqrt{158}}{\big(\sqrt[3]{239+18\sqrt{158}}-\sqrt[3]{77}\big)^3}\bigg),\\
\sum_{k=1}^\infty\frac{(-6)^k}{k^2\binom{3k}{k}}&=  6\arctan^2\!\bigg(\frac{\sqrt3}{\sqrt[3]{26+6\sqrt{17}}+1}\bigg)-\frac12 \log^2\!\bigg(\frac{9+3\sqrt{17}}{\big(\sqrt[3]{13+3\sqrt{17}}-\sqrt[3]{4}\big)^3}\bigg),\\
\sum_{k=1}^\infty  \frac{\left(-\frac{65}{12}\right)^k}{k^2\binom{3k}{k}}&= 6\arctan^2\!\bigg(\frac{\sqrt3\sqrt[3]{65}}{2\sqrt[3]{227+18\sqrt{146}}+\sqrt[3]{65}}\bigg)-\frac12 \log^2\!\bigg(\frac{162+18\sqrt{146}}{\big(\sqrt[3]{227+18\sqrt{146}}-\sqrt[3]{65}\big)^3}\bigg),\\
\sum_{k=1}^\infty  \frac{\left(-\frac{14}{3}\right)^k}{k^2\binom{3k}{k}}&= 6\arctan^2\!\bigg(\frac{\sqrt3\sqrt[3]{7}}{\sqrt[3]{218+18\sqrt{137}}+\sqrt[3]{7}}\bigg)-\frac12 \log^2\!\bigg(\frac{81+9\sqrt{137}}{\big(\sqrt[3]{109+9\sqrt{137}}-\sqrt[3]{28}\big)^3}\bigg),\\
\sum_{k=1}^\infty \frac{\left(-\frac{15}{4}\right)^k}{k^2\binom{3k}{k}}&=  6\arctan^2\!\bigg(\frac{\sqrt3\sqrt[3]{5}}{2\sqrt[3]{23+6\sqrt{14}}+\sqrt[3]{5}}\bigg)-\frac12 \log^2\!\bigg(\frac{18+6\sqrt{14}}{\big(\sqrt[3]{23+6\sqrt{14}}-\sqrt[3]{5}\big)^3}\bigg),\\
\sum_{k=1}^\infty \frac{\left(-\frac{8}{3}\right)^k}{k^2\binom{3k}{k}}&= 
6\arctan^2\!\bigg(\frac{\sqrt3\sqrt[3]{2}}{\sqrt[3]{97+9\sqrt{113}}+\sqrt[3]{2}}\bigg)-\frac12 \log^2\!\bigg(\frac{81+9\sqrt{113}}{\big(\sqrt[3]{97+9\sqrt{113}}-\sqrt[3]{16}\big)^3}\bigg),\\
\sum_{k=1}^\infty \frac{\left(-\frac{17}{12}\right)^k}{k^2\binom{3k}{k}}&= 6\arctan^2\!\bigg(\frac{\sqrt3\sqrt[3]{17}}{2\sqrt[3]{179+126\sqrt{2}}+\sqrt[3]{17}}\bigg)-\frac12 \log^2\!\bigg(\frac{162+126\sqrt{2}}{\big(\sqrt[3]{179+126\sqrt{2}}-\sqrt[3]{17}\big)^3}\bigg).
\end{align*}

The substitution $z=\frac{27xy}{(x+y)^{2}}$ in \eqref{Batir} reduces function $\phi(z(x,y))$ to $\sqrt[3]{{x}/{y}}$, thereby yielding the following more manageable identity:
\begin{equation}\label{A}\tag{A}
\sum_{k = 1}^\infty  \frac{(27xy)^k }{{k^2 (x + y)^{2k} \binom{3k}k}}  = 
6\arctan ^2\! \bigg( \frac{\sqrt 3\sqrt[3]{y} }{2\sqrt[3]{x}  - \sqrt[3]{y} } \bigg) - \frac{1}{2}\log ^2\! \bigg( {\frac{{x + y}}{{(\sqrt[3]{x}  + \sqrt[3]{y} )^3 }}} \bigg),
\end{equation}
which is valid for ${x}/{y}\ge 1$ or ${x}/{y}\le -(\sqrt2+1)^2=-\cot^2(\pi/8)$. 

Differentiating twice identity \eqref{A} with respect to $x$, we obtain, for $x/y>1$ or $x/y\le -(\sqrt2+1)^2$, the following identities:
\begin{equation}\label{B}\tag{B}
\begin{split}
&\hspace{-1.9cm}\sum_{k = 1}^\infty  {\frac{(27xy)^k }{{k(x + y)^{2k} \binom{3k}k}}}  \\
\,\,\,\,\,\,\,\,= \frac{\sqrt[3]{xy} }{{x - y}} & \left(2\sqrt 3\big(\sqrt[3]{x}  + \sqrt[3]{y} \big)\arctan\! \bigg( {\frac{\sqrt 3\sqrt[3]{y}}{2\sqrt[3]{x}  - \sqrt[3]{y} }} \bigg)+ \big(\sqrt[3]{x}  - \sqrt[3]{y} \big)\log\! \bigg( \frac{{x + y}}{{\big(\sqrt[3]{x}  + \sqrt[3]{y} \big)^3 }} \bigg)\!\right)
\end{split}
\end{equation}
and 
\begin{equation}\label{C}\tag{C}
\begin{split}
&\hspace{-0.5cm}\sum_{k = 1}^\infty\frac{(27xy)^k}{(x + y)^{2k} \binom{3k}k} = \frac{4xy}{(x - y)^2}\\
&\,\,\,\,\,+\frac{\sqrt[3]{xy}}{3}\frac{x + y}{(x - y)^3 }\left( 
2\sqrt 3\left( {2\sqrt[3]{xy} \big(\sqrt[3]{x^2}  + \sqrt[3]{y^2} \big) + \sqrt[3]{x^4}  + \sqrt[3]{y^4}} \right)\arctan\! \bigg( \frac{\sqrt 3\sqrt[3]{y}}{2\sqrt[3]{x}- \sqrt[3]{y} } \bigg)\right. \\
&\,\,\,\,\,\left.- \left( {2\sqrt[3]{xy} \big(\sqrt[3]{x^2}  - \sqrt[3]{y^2} \big) - \sqrt[3]{x^4}  + \sqrt[3]{y^4}} \right)\log\! \bigg( {\frac{{x + y}}{\big(\sqrt[3]{x}  + \sqrt[3]{y} \big)^3 }} \bigg)\!\right)\!.
\end{split}
\end{equation}

Setting $(x,y)=(8,1)$, $(x,y)=(8,-1)$, $(x,y)=(8,1/8)$, $(x,y)=(8,-1/8)$, $(x,y)=(1,1/{27})$ and $(x,y)=(1,-1/{27})$ in \eqref{A}, \eqref{B}, and \eqref{C} we have the following series list:
\begin{align*}
\sum_{k = 1}^\infty  \frac{ \big(\frac{8}{3}\big)^{k}}{k \binom{3k}k}  &= \frac{2\sqrt{3}\,\pi}{7}-\frac{2}{7}\log3,  
\\
\sum_{k = 1}^\infty  \frac{ \big(\frac{8}{3}\big)^{k}}{\binom{3k}k}  &= \frac{32}{49}+\frac{74\sqrt3\,\pi}{343}-\frac{18}{343}\log3,\\
\sum_{k = 1}^\infty  (-1)^k\frac{ \big(\frac{6\sqrt6}{7}\big)^{2k}}{k^2 \binom{3k}k}  &= 6\arctan ^2 \Bigl( {\frac{{\sqrt 3 }}{5}} \Bigr) - \frac{1}{2}\log ^2 7, \\
\sum_{k = 1}^\infty  (-1)^k\frac{ \big(\frac{6\sqrt6}{7}\big)^{2k}}{{k\binom{3k}k}} & = \frac{{4\sqrt 3 }}{9}\arctan \Bigl( {\frac{{\sqrt 3 }}{5}} \Bigr) - \frac{2}{3}\log 7,
\\
\sum_{k = 1}^\infty (-1)^{k}{\frac{ \big(\frac{6\sqrt6}{7}\big)^{2k} }{\binom{3k}k}} & =   -\frac{32}{81} - \frac{28\sqrt 3}{729}\arctan \Bigl( {\frac{{\sqrt 3 }}{5}} \Bigr) - \frac{14}{81}\log 7,
\\
\sum_{k = 1}^\infty  {\frac{{\big(\frac{24\sqrt3}{65}\big)^{2k} }}{{k^2 \binom{3k}k}}}  &= 6 \arctan ^2 \Bigl( {\frac{{\sqrt 3 }}{7}} \Bigr) - \frac12\log ^2\Bigl(\frac{25}{13}\Bigr),
\\
\sum_{k = 1}^\infty  \frac{{\big(\frac{24\sqrt3}{65}\big)^{2k} }}{{k \binom{3k}k}}  &= \frac{40\sqrt3}{63}\arctan\Bigl( \frac{{\sqrt 3 }}{7} \Bigr) - \frac{4}{21}\log\Bigl(\frac{25}{13}\Bigr),\\
\sum_{k = 1}^\infty  \frac{{\big(\frac{24\sqrt3}{65}\big)^{2k} }}{\binom{3k}k}  &=\frac{256}{3969} + \frac{68120\sqrt3}{250047}\arctan  \Bigl( {\frac{{\sqrt 3 }}{7}} \Bigr) - \frac{1300}{27783}\log\Bigl(\frac{25}{13}\Bigr),\end{align*}
\begin{align*}
\sum_{k = 1}^\infty (-1)^k \frac{\big(\frac{8\sqrt3}{21}\big)^{2k} }{k^2\binom{3k}k}  &= 6\arctan^2  \Bigl( {\frac{{\sqrt 3 }}{9}} \Bigr) - \frac{1}{2}\log^2\Bigl(\frac{7}{3}\Bigr),
\\
\sum_{k = 1}^\infty (-1)^k \frac{\big(\frac{8\sqrt3}{21}\big)^{2k} }{k\binom{3k}k}  &= \frac{24\sqrt3}{65}\arctan  \Bigl( \frac{\sqrt 3}{9} \Bigr) - \frac{4}{13}\log\Bigl(\frac{7}{3}\Bigr),\\
\sum_{k = 1}^\infty (-1)^{k} \frac{\big(\frac{8\sqrt3}{21}\big)^{2k} }{\binom{3k}k}  &=-\frac{256}{4225}+ \frac{20328\sqrt3}{274625}\arctan\Bigl( {\frac{\sqrt 3 }{9}} \Bigr) - \frac{252}{2197}\log\Bigl(\frac{7}{3}\Bigr),\\
\sum_{k = 1}^\infty  \frac{\big(\frac{27}{28}\big)^{2k} }{k^2\binom{3k}k}  &= 6\arctan^2  \Bigl( {\frac{{\sqrt 3 }}{5}} \Bigr) - \frac{1}{2}\log^2\Bigl(\frac{16}{7}\Bigr),
\\
\sum_{k = 1}^\infty  {\frac{{\left(\frac{27}{28}\right)^{2k} }}{k\binom{3k}k}}  &= \frac{12\sqrt3}{13}\arctan  \Bigl( {\frac{{\sqrt 3 }}{5}} \Bigr) - \frac{3}{13}\log\Bigl(\frac{16}{7}\Bigr),
\\
\sum_{k = 1}^\infty  {\frac{{\left(\frac{27}{28}\right)^{2k} }}{\binom{3k}k}}  &=\frac{27}{169} + \frac{994\sqrt3}{2197}\arctan  \Bigl( {\frac{{\sqrt 3 }}{5}} \Bigr) - \frac{112}{2197}\log\Bigl(\frac{16}{7}\Bigr),
\\
\sum_{k = 1}^\infty (-1)^k \frac{\big(\frac{27}{26}\big)^{2k} }{k^2\binom{3k}k}  &= 6\arctan^2  \Bigl( {\frac{{\sqrt 3 }}{7}} \Bigr) - \frac{1}{2}\log^2\Bigl(\frac{13}{4}\Bigr),
\\
\sum_{k = 1}^\infty (-1)^k {\frac{{\left(\frac{27}{26}\right)^{2k} }}{k\binom{3k}k}}  &= \frac{3\sqrt3}{7}\arctan  \Bigl( {\frac{{\sqrt 3 }}{7}} \Bigr) - \frac{3}{7}\log\Bigl(\frac{13}{4}\Bigr),
\\
\sum_{k = 1}^\infty (-1)^{k} {\frac{{\left(\frac{27}{26}\right)^{2k} }}{\binom{3k}k}}  &=-\frac{27}{196} + \frac{143\sqrt3}{2744}\arctan  \Bigl( {\frac{{\sqrt 3 }}{7}} \Bigr) - \frac{52}{343}\log\Bigl(\frac{13}{4}\Bigr),
\\
\sum_{k = 1}^\infty  \frac{\big(\frac{54\sqrt2}{35}\big)^{2k} }{k^2\binom{3k}k}  &= 6\arctan^2  \Bigl( {\frac{\sqrt 3 }{2}} \Bigr) - \frac{1}{2}\log^2\Bigl(\frac{25}{7}\Bigr),
\\
\sum_{k = 1}^\infty  \frac{\big(\frac{54\sqrt2}{35}\big)^{2k} }{k\binom{3k}k}  &= \frac{60\sqrt3}{19}\arctan\Bigl( \frac{\sqrt 3 }{2} \Bigr) - \frac{6}{19}\log\Bigl(\frac{25}{7}\Bigr),
\\
\sum_{k = 1}^\infty  \frac{\big(\frac{54\sqrt2}{35}\big)^{2k} }{\binom{3k}k}  &= \frac{864}{361}+\frac{35420\sqrt3}{6859}\arctan\Bigl( \frac{\sqrt 3}{2} \Bigr) - \frac{350}{6859}\log\Bigl(\frac{25}{7}\Bigr),
\\
\sum_{k = 1}^\infty  (-1)^k\frac{\big(\frac{54\sqrt2}{19}\big)^{2k} }{k^2\binom{3k}k}  &= 6\arctan^2  \Bigl( {\frac{\sqrt 3 }{4}} \Bigr) - \frac{1}{2}\log^2 19,
\\
\sum_{k = 1}^\infty  (-1)^k\frac{\big(\frac{54\sqrt2}{19}\big)^{2k} }{k\binom{3k}k}  &= \frac{12\sqrt3}{35}\arctan\Bigl( {\frac{\sqrt3}{4}} \Bigr) - \frac{6}{7}\log19,
\\
\sum_{k = 1}^\infty  (-1)^k\frac{\big(\frac{54\sqrt2}{19}\big)^{2k} }{\binom{3k}k}  &= -\frac{864}{1225}-\frac{4484\sqrt3}{42875}\arctan\Bigl( {\frac{\sqrt3}{4}} \Bigr) - \frac{38}{343}\log19.
\end{align*}

The first two series from this list can be found in \cite[Corollaries  2.3 and 3.3]{chu}.

\section{Evaluations at selected arguments}

In this section we will evaluate identities \eqref{A}, \eqref{B} and \eqref{C} at carefully selected values of $x$ and $y$. Some of the resulting summation identities will involve Fibonacci and Lucas numbers in the summand and possibly in the evaluations. 

Let $F_n$ and $L_n$ denote the $n$-th Fibonacci and Lucas numbers, both satisfying the recurrence relation
$$X_n = X_{n-1} + X_{n-2},\quad n\geq 2,$$ but with the initial conditions $F_0 = 0$, $F_1 = 1$ and $L_0 = 2$, $L_1 = 1$.
Extending these numbers to negative subscripts gives 
$$F_{-j}=(-1)^{j-1}F_j,\quad L_{-j}=(-1)^jL_j.$$

Throughout the paper, we denote the golden ratio  $\alpha=(1+\sqrt 5)/2$ and write $\beta=(1-\sqrt 5)/2$, so that $\alpha\beta=-1$ and $\alpha+\beta=1$. For any integer $j$, the explicit formulas (Binet formulas) for Fibonacci and Lucas numbers are
\begin{equation}\label{binet}
F_j  = \frac{{\alpha ^j  - \beta ^j }}{{\alpha  - \beta }},\qquad L_j  = \alpha ^j  + \beta ^j.
\end{equation}

We will often require the following identities, valid for any integer $r$, which are straightforward consequences of \eqref{binet}:
\begin{align}
\label{F1}
\alpha ^{2r}  + ( - 1)^{r + 1}  &= \alpha ^r F_r \sqrt 5 ,\\[3pt]
\label{F2}
\alpha ^{2r}  + ( - 1)^r  &= \alpha ^r L_r,\\[3pt]
\beta ^{2r}  + ( - 1)^{r + 1} & =  - \beta ^r F_r \sqrt 5,\\[3pt]
\beta ^{2r}  + ( - 1)^r  &= \beta ^r L_r.
\end{align}

We also require the following well-known identities \cite{koshy,vajda}:
\begin{align}
\label{F3} F_n^2  + ( - 1)^{n + m - 1} F_m^2  &= F_{n - m} F_{n + m}, \\[3pt]
 \label{F4} F_{n + m}  + ( - 1)^m F_{n - m}  &= L_m F_n ,\\[3pt]
\label{F5} F_{n + m}  + ( - 1)^{m - 1} F_{n - m} & = F_m L_n,\\[3pt]
\label{F6} L_n F_m  + F_n L_m  &= 2F_{n + m},\\[3pt] 
\label{F7} L_{n + m}  + ( - 1)^m L_{n - m} & = L_m L_n ,\\[3pt]
\label{F8} L_{n + m}  + ( - 1)^{m - 1} L_{n - m}  &= 5F_m F_n .
\end{align}

\subsection{Results from identity \eqref{A}}

\begin{theorem}\label{Th1}
	If $r$ is a non-negative integer, then
	\begin{align}
		\begin{split}\label{eq.mbx7fws}
	\sum_{k = 1}^\infty  {\frac{(-1)^{k(r-1)} \big(\frac{27}{5}\big)^k}{k^2\binom{3k}k F_r^{2k} }} &\\
	 = 6\arctan\! ^2 &\bigg(\frac{{\sqrt 3 }}{2\sqrt[3]{\alpha^{2r}}  + (-1)^{r}} \bigg) - \frac{1}{2}\log ^2\! \bigg( {\frac{\sqrt 5\alpha ^r F_r }{\big(\sqrt[3]{\alpha^{2r}}  - ( - 1)^{r} \big)^3 }} \bigg),\quad r\ne0,
	\end{split}\\
		\begin{split}
	\label{eq.w7owcwt}
	\sum_{k = 1}^\infty  {\frac{{( - 1)^{kr} 27^k }}{{k^2 \binom{3k}k}L_r^{2k} }} \hspace{1cm}& \\
	= 6\arctan ^2 &\bigg( {\frac{{ \sqrt 3 }}{{2\sqrt[3]{\alpha^{2r}}  - ( - 1)^r }}} \bigg) - \frac{1}{2}\log ^2\!\bigg( {\frac{{\alpha ^r L_r }}{{\big(\sqrt[3]{\alpha^{2r}}  + ( - 1)^r \big)^3 }}} \bigg),\quad r\ne 1.
	\end{split}
	\end{align}
\end{theorem}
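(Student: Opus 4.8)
The plan is to derive both identities by specializing the master identity \eqref{A} at values of $x$ and $y$ built from powers of the golden ratio $\alpha$ together with a sign $\pm1$, and then to collapse the resulting closed forms using the Binet consequences \eqref{F1} and \eqref{F2}. The key observation is that $\sqrt[3]{\pm1}=\pm1$ for the real cube root, so that choosing $y=\pm1$ turns $\sqrt[3]{y}$ into a sign; moreover the argument of the arctangent enters \eqref{A} only through its square, so any overall sign on that argument is irrelevant.

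For \eqref{eq.mbx7fws} I would set $x=\alpha^{2r}$ and $y=(-1)^{r+1}$, where $r\ge1$. Then $\sqrt[3]{y}=(-1)^{r+1}$, and by \eqref{F1} the numerator of the logarithm argument becomes $x+y=\alpha^{2r}+(-1)^{r+1}=\sqrt5\,\alpha^rF_r$. Substituting into the left-hand side, $(27xy)^k/(x+y)^{2k}$ reduces to $(-1)^{k(r+1)}(27/5)^k/F_r^{2k}$, and since $(-1)^{k(r+1)}=(-1)^{k(r-1)}$ this is exactly the summand of \eqref{eq.mbx7fws}. On the right-hand side, $2\sqrt[3]{x}-\sqrt[3]{y}=2\sqrt[3]{\alpha^{2r}}+(-1)^r$ and $\sqrt[3]{x}+\sqrt[3]{y}=\sqrt[3]{\alpha^{2r}}-(-1)^r$, so the logarithm is already in the claimed form, while the arctangent argument $\sqrt3\,(-1)^{r+1}/\big(2\sqrt[3]{\alpha^{2r}}+(-1)^r\big)$ yields the stated value once its sign is absorbed by the squaring. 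The restriction $r\ne0$ is forced because $F_0=0$ makes the summand undefined.

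For \eqref{eq.w7owcwt} the analogous choice is $x=\alpha^{2r}$ and $y=(-1)^r$, so that $\sqrt[3]{y}=(-1)^r$ and, by \eqref{F2}, $x+y=\alpha^{2r}+(-1)^r=\alpha^rL_r$. Here $(27xy)^k/(x+y)^{2k}$ collapses to $(-1)^{kr}27^k/L_r^{2k}$, matching the left-hand side, while $2\sqrt[3]{x}-\sqrt[3]{y}=2\sqrt[3]{\alpha^{2r}}-(-1)^r$ and $\sqrt[3]{x}+\sqrt[3]{y}=\sqrt[3]{\alpha^{2r}}+(-1)^r$ reproduce the arctangent and logarithm on the right, again with the arctangent's sign killed by the square.

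The step that needs genuine care, rather than routine algebra, is verifying that each substitution lands inside the domain of validity of \eqref{A}, namely $x/y\ge1$ or $x/y\le-(\sqrt2+1)^2$. For the first identity, odd $r$ gives $x/y=\alpha^{2r}>1$ and even $r\ge2$ gives $x/y=-\alpha^{2r}\le-\alpha^4<-(\sqrt2+1)^2$, so every $r\ge1$ is admissible. For the second identity, even $r$ gives $x/y=\alpha^{2r}\ge1$ (with equality at $r=0$, which recovers the $z=27/4$ evaluation), whereas odd $r$ gives $x/y=-\alpha^{2r}$, and this satisfies $-\alpha^{2r}\le-(\sqrt2+1)^2$ precisely when $r\ge3$; the single failure is at $r=1$, where $-\alpha^2\approx-2.618$ lies in the forbidden gap, which is exactly the excluded case $r\ne1$. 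This domain check, together with $F_0=0$, accounts for both exclusions in the statement.
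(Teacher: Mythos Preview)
Your proof is correct and follows exactly the paper's approach: the same substitutions $x=\alpha^{2r}$, $y=(-1)^{r+1}$ (resp.\ $y=(-1)^{r}$) in identity \eqref{A}, reduced via \eqref{F1} (resp.\ \eqref{F2}). You go further than the paper by explicitly verifying that each choice of $(x,y)$ lies in the stated validity range of \eqref{A} and by explaining how this accounts for the exclusions $r\ne0$ and $r\ne1$; this is a welcome addition.
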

\begin{proof}
	Identity \eqref{eq.mbx7fws} is proved by setting $x=\alpha^{2r}$, $y=(-1)^{r + 1}$ in \eqref{A} and making use of identity \eqref{F1}. Identity \eqref{eq.w7owcwt} follows from setting $x=\alpha^{2r}$, $y=(-1)^r$ in \eqref{A} and using  \eqref{F2}.
\end{proof}
\begin{example} Evaluation at $r=1$, $r=2$ and $r=3$ in \eqref{eq.mbx7fws} and \eqref{eq.w7owcwt}, respectively, gives 
	\begin{align*}
	\sum_{k = 1}^\infty  {\frac{\big(\frac{27}5\big)^k }{{k^2 \binom{3k}k}}}  &= 6\arctan ^2\!\bigg( {\frac{{\sqrt 3 }}{{2\sqrt[3]{\alpha^{2}}  - 1}}} \bigg) - \frac{1}{2}\log ^2\! \bigg( {\frac{{\alpha \sqrt 5 }}{{\big(\sqrt[3]{\alpha^{2}}  + 1\big)^3 }}} \bigg),\\
	\sum_{k = 1}^\infty  {\frac{( - 27)^{k}}{{k^2 \binom{3k}k}}}  &=  6\arctan ^2\! \bigg( {\frac{{\sqrt 3 }}{{2\sqrt[3]{\alpha^{2}}  + 1}}} \bigg) - \frac{1}{2}\log ^2\! \bigg( {\frac{\alpha }{{\big(\sqrt[3]{\alpha^{2}}  - 1\big)^3 }}} \bigg),\\
	\sum_{k = 1}^\infty  {\frac{{\left(-\frac{27}{5}\right)^k }}{{k^2 \binom{3k}k}}} & =   6\arctan ^2\! \bigg( {\frac{{\sqrt 3 }}{{2\sqrt[3]{\alpha^{4}}  + 1}}} \bigg) - \frac{1}{2}\log ^2\! \bigg( \frac{{\alpha ^2 \sqrt 5 }}{{\big(\sqrt[3]{\alpha^{4}}  - 1\big)^3 }} \bigg),\\
	\sum_{k = 1}^\infty  {\frac{{3^k }}{{k^2 \binom{3k}k}}}  &= 6\arctan ^2\! \bigg( {\frac{{\sqrt 3 }}{{2\sqrt[3]{\alpha^{4}}  - 1}}} \bigg) - \frac{1}{2}\log ^2\! \bigg( {\frac{{3\alpha ^2 }}{{\big(\sqrt[3]{\alpha^{4}}  + 1\big)^3 }}} \bigg),\\
	\sum_{k =1 1}^\infty  {\frac{{\big(\frac{27}{20}\big)^k }}{{k^2 \binom{3k}k}}}  &= 6\arctan ^2\! \big(\sqrt{15} -\sqrt{12}\big) - \frac{1}{2}\log ^2\! \Big(\frac52\Big),\\
	\sum_{k = 1}^\infty  {\frac{ \left(-\frac{27}{16}\right)^k }{{k^2 \binom{3k}k}}} & =  6\arctan ^2\! \Bigl({\frac{4\sqrt 3-\sqrt{15}}{11}} \Bigr) -  2\log ^2 2.
	\end{align*}
\end{example} 
\begin{corollary}
	If $r$ is a non-negative integer, then
	\begin{align*}
	\sum_{k = 1}^\infty  {\frac{{(-1)^{k(r - 1)} \big(\frac{27}{5}\big)^k }}{{k^2 \binom{3k}k} F_{3r}^{2k} }} & = 6\arctan ^2\! \bigg( {\frac{{\sqrt 3 }}{{\alpha ^{2r} + \alpha^{r}L_r }}} \bigg) - \frac{1}{2}\log ^2\! \left( {\frac{{F_{3r} }}{{5F_r^3 }}} \right)\!,\quad r\ne0,\\
	\sum_{k = 1}^\infty  {\frac{{( - 1)^{kr} 27^k }}{k^2  \binom{3k}kL_{3r}^{2k}}}  &= 6\arctan ^2\! \bigg( {\frac{{\sqrt 3 }}{{\alpha ^{2r}  + \sqrt 5\alpha^r F_r  }}} \bigg) - \frac{1}{2}\log ^2\! \left( {\frac{{L_{3r} }}{{L_r^3 }}} \right)\!.
	\end{align*}
\end{corollary}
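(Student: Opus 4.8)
The plan is to derive both identities directly from Theorem~\ref{Th1} by the substitution $r\mapsto 3r$, followed by simplification of the cube roots and sign factors through the Binet relations \eqref{F1} and \eqref{F2}. Since $3r$ is a non-negative integer whenever $r$ is, the substitution is legitimate, and the only work is to show that the specialized right-hand sides collapse to the forms asserted in the corollary.

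For the first identity I would replace $r$ by $3r$ in \eqref{eq.mbx7fws}. On the left-hand side, because $3r$ and $r$ share the same parity one has $(-1)^{k(3r-1)}=(-1)^{k(r-1)}$, while $F_r^{2k}$ becomes $F_{3r}^{2k}$, reproducing the summand. On the right-hand side the key observation is $\sqrt[3]{\alpha^{6r}}=\alpha^{2r}$ (the real positive cube root, as $\alpha>0$), together with $(-1)^{3r}=(-1)^r$. The arctangent denominator then reads $2\alpha^{2r}+(-1)^r$, which by \eqref{F2} in the form $\alpha^{2r}+(-1)^r=\alpha^r L_r$ equals $\alpha^{2r}+\alpha^r L_r$. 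For the logarithm, the numerator becomes $\sqrt5\,\alpha^{3r}F_{3r}$ and the denominator is $(\alpha^{2r}-(-1)^r)^3$; applying \eqref{F1} as $\alpha^{2r}-(-1)^r=\sqrt5\,\alpha^r F_r$ gives $(\alpha^{2r}-(-1)^r)^3=5\sqrt5\,\alpha^{3r}F_r^3$, so the argument collapses to $F_{3r}/(5F_r^3)$. Since $3r\ne0$ whenever $r\ne0$, the hypothesis $r\ne0$ of \eqref{eq.mbx7fws} is respected.

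For the second identity I would analogously replace $r$ by $3r$ in \eqref{eq.w7owcwt}. Here $(-1)^{k\cdot3r}=(-1)^{kr}$ and $L_r^{2k}$ becomes $L_{3r}^{2k}$. The arctangent denominator becomes $2\alpha^{2r}-(-1)^r=\alpha^{2r}+\sqrt5\,\alpha^r F_r$ by \eqref{F1}, while the logarithm has numerator $\alpha^{3r}L_{3r}$ and denominator $(\alpha^{2r}+(-1)^r)^3=(\alpha^r L_r)^3=\alpha^{3r}L_r^3$ by \eqref{F2}, so the argument reduces to $L_{3r}/L_r^3$. The exceptional value $r=1$ in \eqref{eq.w7owcwt} now corresponds to $3r=1$, which is impossible for integer $r$; hence no restriction survives, consistent with the unrestricted statement.

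The computation is entirely mechanical once the substitution is in place, so there is no genuine obstacle; the only points demanding care are the parity bookkeeping of the sign factors $(-1)^{\,\cdot}$ and the verification that $\sqrt[3]{\alpha^{6r}}$ simplifies to the positive quantity $\alpha^{2r}$, which is exactly what allows \eqref{F1} and \eqref{F2} to be applied cleanly to both the arctangent and logarithm arguments.
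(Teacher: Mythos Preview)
Your proposal is correct and follows exactly the paper's approach: replace $r$ by $3r$ in \eqref{eq.mbx7fws} and \eqref{eq.w7owcwt} and simplify via \eqref{F1} and \eqref{F2}. Your write-up merely spells out in detail the parity bookkeeping and the collapse of cube roots that the paper leaves to the reader.
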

\begin{proof} 	Replace $r$ with $3r$ in \eqref{eq.mbx7fws} and \eqref{eq.w7owcwt} and use \eqref{F1}, \eqref{F2}.
\end{proof}
\begin{theorem}
	Let $m$ and $n$ be positive integers such that $n \geq m$ unless stated otherwise. Then
	\begin{align}
	\begin{split}
	\sum_{k = 1}^\infty & \frac{( - 1)^{k(n - m - 1)}}{k^2 \binom{3k}k} \bigg( {\frac{{27F_n^2 F_m^2 }}{{F_{n - m}^2 F_{n + m}^2 }}} \bigg)^k\nonumber\\ 
	& = 6\arctan ^2\! \bigg( \frac{\sqrt 3\sqrt[3]{F_m^2}}{2 \sqrt[3]{F_n^2}  + ( - 1)^{n - m} \sqrt[3]{F_m^2}}\bigg) - \frac{1}{2}\log ^2\! \bigg( {\frac{{F_{n - m} F_{n + m} }}{\big(\sqrt[3]{F_n^2}  -  ( - 1)^{n - m} \sqrt[3]{F_m^2} \big)^3 }} \bigg),\quad n>m,	
	\end{split}\\
	\begin{split}
	\sum_{k = 1}^\infty & \frac{( - 1)^{km}}{k^2 \binom{3k}{k}} \left( {\frac{{27F_{n + m} F_{n - m} }}{{L_m^2 F_n^2 }}} \right)^k \\
	&= 6\arctan ^2 \!\bigg( {\frac{\sqrt 3 \sqrt[3]{F_{n - m}}}{2 \sqrt[3]{F_{n + m}}  - ( - 1)^m \sqrt[3]{F_{n - m}}}} \bigg) - \frac{1}{2}\log ^2\! \bigg( {\frac{{L_m F_n}}{{\big(\sqrt[3]{F_{n + m}}  + ( - 1)^m \sqrt[3]{F_{n - m}} \big)^3 }}} \bigg),
	\end{split}\nonumber\\
	\begin{split}
	\sum_{k = 1}^\infty & \frac{( - 1)^{k(m - 1)}}{k^2 \binom{3k}k} \left( {\frac{{27F_{n + m} F_{n - m} }}{F_m^2 L_n^2}} \right)^k  \\
	& = 6\arctan ^2\! \bigg( {\frac{\sqrt 3\, \sqrt[3]{F_{n - m}}}{{2\sqrt[3]{F_{n + m}}  + ( - 1)^m \sqrt[3]{F_{n - m}} }}} \bigg) - \frac{1}{2}\log ^2\! \bigg( {\frac{{F_m L_n }}{\big(\sqrt[3]{F_{n + m}}  - ( - 1)^m \sqrt[3]{F_{n - m}} \big)^3 }} \bigg),\nonumber
	\end{split}\\
	\begin{split}
	\sum_{k = 1}^\infty&\frac{1}{{k^2 \binom{3k}k}} \left( \frac{27}{4}\frac{{F_{2m} F_{2n} }}{{F_{n + m}^2 }} \right)^k \\ 
	&= 6\arctan ^2\! \bigg(\frac{\sqrt 3\sqrt[3]{L_m F_n}}{{2\sqrt[3]{L_nF_m}  - \sqrt[3]{L_mF_n}}} \bigg) - \frac{1}{2}\log ^2\! \bigg( {\frac{2F_{n + m} }{\big(\sqrt[3]{L_nF_m}  + \sqrt[3]{L_mF_n} \big)^3 }} \bigg),\quad  \frac{L_n}{L_m}>\frac{F_n}{F_m},\nonumber
	\end{split}\\
	\begin{split}\label{ex1}
	\sum_{k = 1}^\infty&  \frac{( - 1)^{km}}{k^2 \binom{3k}k} \left( {\frac{27L_{n + m} L_{n - m} }{{L_m^2 L_n^2 }}} \right)^k \\
	&= 6\arctan ^2\! \bigg( {\frac{\sqrt 3\sqrt[3]{L_{n - m}}}{2\sqrt[3]{L_{n + m}}  - ( - 1)^m\sqrt[3]{L_{n - m}}}} \bigg) - \frac{1}{2}\log ^2\! \bigg( {\frac{{L_m L_n }}{{\big(\sqrt[3]{L_{n + m}}  + ( - 1)^m \sqrt[3]{L_{n - m}}\big)^3 }}} \bigg),
	\end{split}\\
	\begin{split}\label{ex2}
	\sum_{k = 1}^\infty&  \frac{( - 1)^{k(m - 1)}}{k^2 \binom{3k}k } \left( {\frac{{27L_{n + m} L_{n - m} }}{25F_m^2 F_n^2 }} \right)^k \\ 
	&= 6\arctan ^2\! \bigg( {\frac{\sqrt 3\sqrt[3]{L_{n - m}}}{2\sqrt[3]{L_{n + m}}  + ( - 1)^m \sqrt[3]{L_{n - m}}}} \bigg) - \frac{1}{2}\log ^2\! \bigg( {\frac{{5F_m F_n }}{{\big(\sqrt[3]{L_{n + m}}  - ( - 1)^m \sqrt[3]{L_{n - m}}\big)^3 }}} \bigg).
	\end{split}
	\end{align}
\end{theorem}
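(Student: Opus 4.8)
The plan is to read each of the six displayed identities as the master identity~\eqref{A} evaluated at a single, carefully chosen pair $(x,y)$ whose entries are squares or products of Fibonacci and Lucas numbers. The design principle is that \eqref{A} is completely determined by $x$ and $y$: the exponential base is $27xy/(x+y)^2$, the $\arctan^2$ argument is $\sqrt3\,\sqrt[3]{y}/(2\sqrt[3]{x}-\sqrt[3]{y})$, and the $\log^2$ argument is $(x+y)/(\sqrt[3]{x}+\sqrt[3]{y})^3$. Thus I would first match the cube roots $\sqrt[3]{x}$ and $\sqrt[3]{y}$ against the radicals shown in each target, read off $x$ and $y$, and then let exactly one of the addition identities~\eqref{F3}--\eqref{F8} collapse the sum $x+y$ into the compact closed form visible in the summand and in the $\log^2$ numerator.

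Carrying this out, I would record the six substitutions in order. For the first line take $x=F_n^2$, $y=(-1)^{n-m+1}F_m^2$ and apply~\eqref{F3}, so that $x+y=F_{n-m}F_{n+m}$. For the second, $x=F_{n+m}$, $y=(-1)^m F_{n-m}$ with~\eqref{F4}, giving $x+y=L_mF_n$; for the third, $x=F_{n+m}$, $y=(-1)^{m+1}F_{n-m}$ with~\eqref{F5}, giving $x+y=F_mL_n$. For the fourth, $x=L_nF_m$, $y=L_mF_n$, where~\eqref{F6} yields $x+y=2F_{n+m}$ and the relation $F_{2j}=F_jL_j$ turns $27xy=27F_{2m}F_{2n}$ into the stated base $\tfrac{27}{4}F_{2m}F_{2n}/F_{n+m}^2$. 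Finally, the fifth line~\eqref{ex1} uses $x=L_{n+m}$, $y=(-1)^m L_{n-m}$ with~\eqref{F7} ($x+y=L_mL_n$), and the sixth~\eqref{ex2} uses $x=L_{n+m}$, $y=(-1)^{m+1}L_{n-m}$ with~\eqref{F8} ($x+y=5F_mF_n$). In every case $(x+y)^{2k}$ reproduces the denominator of the summand and $27xy$ its numerator.

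The one genuinely fiddly piece of the bookkeeping is the signs. Each alternating factor $(-1)^{k(\cdots)}$ is nothing but $(\operatorname{sgn} xy)^k$ read off from the sign concealed in $y$: for instance in the last line $xy=(-1)^{m+1}L_{n+m}L_{n-m}$, and $(-1)^{m+1}=(-1)^{m-1}$ recovers the displayed $(-1)^{k(m-1)}$. The $\arctan^2$ arguments are written in a canonical, positive-looking form; when $y<0$ the real cube root $\sqrt[3]{y}$ is negative, which merely flips the sign of the whole fraction $\sqrt3\,\sqrt[3]{y}/(2\sqrt[3]{x}-\sqrt[3]{y})$, and this is harmless because $\arctan^2$ is even. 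The signs inside $\log^2$, on the other hand, must be reproduced exactly, and one checks directly that $\sqrt[3]{x}+\sqrt[3]{y}$ equals the stated cube-root combination $\sqrt[3]{\cdots}\mp(-1)^{m}\sqrt[3]{\cdots}$ in each line.

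The main obstacle I anticipate is not this algebra but establishing admissibility, i.e.\ that the substituted $(x,y)$ falls in the region $x/y\ge1$ or $x/y\le-(\sqrt2+1)^2$ where~\eqref{A} holds, which coincides with convergence of the left-hand series. When the parity makes $y>0$ this is automatic once $x\ge y$ (forcing $n\ge m$, the strict $n>m$ in the first line since $n=m$ gives the inadmissible ratio $x/y=-1$, and precisely the condition $L_n/L_m>F_n/F_m$ in the fourth). When the parity makes $y<0$, however, admissibility demands the nontrivial threshold $x/|y|\ge(\sqrt2+1)^2=\cot^2(\pi/8)$, which genuinely fails for the smallest indices: for example $(m,n)=(1,3)$ in the first line gives base $-12$, so that series diverges, and the purely Lucas lines similarly exclude small $m$. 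Thus the delicate part is to pin down, parity by parity, exactly which index pairs clear this threshold; once that is granted each identity follows at once from~\eqref{A} and the single relation among~\eqref{F3}--\eqref{F8} named above.
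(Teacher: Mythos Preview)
Your approach is exactly the paper's: the paper's entire proof reads ``Straightforward using identities~\eqref{F3} to~\eqref{F8} and identity~\eqref{A},'' and you have supplied precisely the six $(x,y)$ substitutions and the matching addition formula that the paper leaves implicit. Your admissibility analysis (the $x/y\ge1$ versus $x/y\le-(\sqrt2+1)^2$ threshold, and the small-index failures it entails) is in fact more careful than the paper, which does not spell out those restrictions.
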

\begin{proof} 	Straightforward using identities \eqref{F3} to \eqref{F8} and identity \eqref{A}.
\end{proof}
\begin{example}
	Identities \eqref{ex1} and \eqref{ex2} yield 
	\begin{align*}
	\sum_{k = 1}^\infty \frac{( - 1)^{kn}}{k^2 \binom{3k}k} \left( {\frac{54L_{2n}}{L_n^4}} \right)^k & = 6\arctan ^2\! \bigg( \frac{\sqrt 3} {\sqrt[3]{4L_{2n}}  - ( - 1)^n}\bigg) - \frac12\log ^2\! \bigg( {\frac{{L_n^2 }}{\big(\sqrt[3]{L_{2n}}  + ( - 1)^n \sqrt[3]{2}\big)^3 }}\bigg),\\
	\sum_{k = 1}^\infty \frac{( - 1)^{k(n-1)}}{k^2 \binom{3k}k} \left( {\frac{54L_{2n}}{25F_n^4}} \right)^k & = 6\arctan ^2\! \bigg( \frac{\sqrt 3} {\sqrt[3]{4L_{2n}}  + ( - 1)^n}\bigg) - \frac12 \log ^2\! \bigg( {\frac{{5F_n^2 }}{\big(\sqrt[3]{L_{2n}} -  ( - 1)^n \sqrt[3]{2}\big)^3 }}\bigg).
	\end{align*}
\end{example}

By writing $\cot^2 x$ for $x$ and setting $y=1$, a useful trigonometric version of identity \eqref{A} is obtained, namely,
\begin{equation}
\label{D}
\sum_{k = 1}^\infty  {\frac{{\big(\frac{27}{4}\big)^k }}{{k^2 \binom{3k}k}}\sin ^{2k} 2x}  = 6\arctan ^2\! \bigg( {\frac{{\sqrt 3 }}{{2\sqrt[3]{\cot^{2}x}  - 1}}} \bigg) - \frac{1}{2}\log ^2 \!\bigg( {\frac{{\csc ^2 x}}{{\big(\sqrt[3]{\cot^2 x}  + 1\big)^3 }}} \bigg).
\end{equation}
Identity \eqref{D} is valid for $x\in (0,\pi/4]$.

Evaluation of \eqref{D} at $x={\pi}/{12}$, $x={\pi}/{8}$ and $x={\pi}/{6}$, respectively, gives
\begin{align*}
\sum_{k = 1}^\infty  {\frac{{\big(\frac{27}{16}\big)^k }}{k^2 \binom{3k}k}} & = 6\arctan ^2 \!\bigg( {\frac{{\sqrt 3 }}{{2\sqrt[3]{7+4\sqrt 3}  - 1}}} \bigg) - \frac{1}{2}\log ^2\! \bigg( {\frac{4 + 2\sqrt 3 }{{\big(\sqrt[3]{7+4\sqrt 3 }  + 1\big)^3 }}} \bigg),
\end{align*}
\begin{align*}
\sum_{k = 1}^\infty  {\frac{{\big(\frac{27}{8}\big)^k }}{{k^2 \binom{3k}k}}}  &= 6\arctan ^2\! \bigg( {\frac{{\sqrt 3 }}{{2\sqrt[3]{3+2\sqrt 2}  - 1}}} \bigg) - \frac{1}{2}\log ^2\! \bigg( {\frac{{4 + 2\sqrt 2 }}{\big(\sqrt[3]{3+2\sqrt 2}  + 1\big)^3 }} \bigg),\\
\sum_{k = 1}^\infty  {\frac{{\big(\frac{81}{16}\big)^k }}{k^2 \binom{3k}k}} & = 6\arctan ^2 \!\bigg( {\frac{{\sqrt 3 }}{{2\sqrt[3]{3} - 1}}} \bigg) - \frac{1}{2}\log ^2\! \bigg( {\frac{4}{{(\sqrt[3]{3} + 1)^3}}} \bigg).
\end{align*}

Writing $-\cot^2x$ for $x$ and setting $y=1$ in \eqref{A} and noting that $1-\cot^2x=-{\cos 2x}\,{\sin^{-2}x}$,
we obtain another useful trigonometric version of \eqref{A}:
\begin{equation}\label{E}
\sum_{k = 1}^\infty  \frac{{\left(-\frac{27}{4}\right)^k }}{{k^2 \binom{3k}k}}\tan ^{2k} 2x  = 6\arctan ^2\! \bigg( {\frac{{\sqrt 3 }}{2\sqrt[3]{\cot^2 x}  + 1}} \bigg) - \frac{1}{2}\log ^2\! \bigg(\frac{{\csc ^2 x\,\cos 2x}}{{\big(\sqrt[3]{\cot^2x}  - 1\big)^3 }} \bigg),
\end{equation}
valid for $x\in (0,{\pi}/{8}]$. At $x={\pi}/{12}$ in \eqref{E} we obtain
\begin{align*}
\sum_{k = 1}^\infty  {\frac{{\left(-\frac94\right)^k }}{{k^2 \binom{3k}k}}} = 6\arctan^2\! \bigg( {\frac{{\sqrt 3 }}{{2\sqrt[3]{7 + 4\sqrt 3}  + 1}}} \bigg) - \frac{1}{2}\log ^2\! \bigg( {\frac{6+4\sqrt 3}{\big(\sqrt[3]{7 + 4\sqrt 3}  - 1\big)^3 }} \bigg).
\end{align*}

\subsection{Results from identity \eqref{B}}

\begin{theorem}\label{Th4}
	If $r$ is a positive integer, then
	\begin{align}\label{eq.grrm4zm}
	\begin{split}
	\sum_{k = 1}^\infty  {\frac{{( - 1)^{k(r - 1)} \big(\frac{27}{5}\big)^k }}{{k  \binom{3k}k F_r^{2k}}}}  &= 
	\frac{1}{\sqrt[3]{\alpha^r}L_r}
	\left(2\sqrt 3\big(\sqrt[3]{\alpha^{2r}}-(- 1)^r\big)\arctan\! \bigg( {\frac{\sqrt 3}{{2\sqrt[3]{\alpha^{2r}}  + ( - 1)^r }}} \bigg)\right.\\
	&\left.\,\quad- ( - 1)^{r}\big(\sqrt[3]{\alpha^{2r}}  + ( - 1)^r\big)\log\! \bigg( {\frac{{\sqrt 5\,\alpha ^r F_r  }}{{\big(\sqrt[3]{\alpha ^{2r}}  - ( - 1)^r \big)^3 }}} \bigg)\!\right)
	\end{split}
	\end{align}
	and
	\begin{align}
	\label{eq.nrc731g}
	\begin{split}
	\sum_{k = 1}^\infty  {\frac{{( - 1)^{kr} 27^k }}{k\binom{3k}kL_r^{2k} }}  &= \frac{\sqrt5}{5\sqrt[3]{\alpha^r}F_r}
	\left(2\sqrt 3\, \big(\sqrt[3]{\alpha ^{2r}}  + ( - 1)^r \big) \arctan\! \bigg( {\frac{\sqrt 3 }{2\sqrt[3]{\alpha ^{2r}}  - ( - 1)^r }} \bigg)\right.\\
	&\left.\,\quad + ( - 1)^r\big( \sqrt[3]{\alpha ^{2r}} - ( - 1)^r\big)\log\! \bigg( {\frac{{\alpha ^r L_r }}{\big(\sqrt[3]{\alpha ^{2r}}  + ( - 1)^r \big)^3 }} \bigg)\!\right)\!.
	\end{split}
	\end{align}
\end{theorem}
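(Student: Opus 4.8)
The plan is to mimic the proof of Theorem~\ref{Th1}, but starting from identity~\eqref{B} instead of~\eqref{A}. For \eqref{eq.grrm4zm} I would set $x=\alpha^{2r}$, $y=(-1)^{r+1}$ in \eqref{B}, and for \eqref{eq.nrc731g} I would set $x=\alpha^{2r}$, $y=(-1)^r$; in each case the ratio $x/y$ lands in the validity range of \eqref{B} (that $x/y>1$ or $x/y\le-(\sqrt2+1)^2$) for the admissible values of $r$. The first substitution gives $x+y=\alpha^{2r}+(-1)^{r+1}=\sqrt5\,\alpha^rF_r$ by \eqref{F1} and $x-y=\alpha^{2r}+(-1)^r=\alpha^rL_r$ by \eqref{F2}; the second simply swaps these roles, giving $x+y=\alpha^rL_r$ and $x-y=\sqrt5\,\alpha^rF_r$.

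First I would reduce the left-hand sides. Writing $(27xy)^k/(x+y)^{2k}$ for the first substitution with $x+y=\sqrt5\,\alpha^rF_r$, the factors $\alpha^{2rk}$ cancel and a $5^k$ surfaces, producing $(-1)^{k(r+1)}(27/5)^k/F_r^{2k}$; since $(-1)^{k(r+1)}=(-1)^{k(r-1)}$, this is exactly the summand of \eqref{eq.grrm4zm}. The analogous cancellation for the second substitution yields the summand $(-1)^{kr}27^k/L_r^{2k}$ of \eqref{eq.nrc731g}.

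Next I would simplify the right-hand sides. Because $y=\pm1$ in both cases, $\sqrt[3]{y}=(-1)^{r+1}$ (resp. $(-1)^r$), so that $2\sqrt[3]{x}-\sqrt[3]{y}$ and $\sqrt[3]{x}\pm\sqrt[3]{y}$ collapse to the clean forms $2\sqrt[3]{\alpha^{2r}}+(-1)^r$ and $\sqrt[3]{\alpha^{2r}}\mp(-1)^r$ appearing in the statement. The prefactor $\sqrt[3]{xy}/(x-y)$ becomes $\sqrt[3]{\alpha^{2r}}\,(-1)^{r+1}/(\alpha^rL_r)$ (resp. $\sqrt[3]{\alpha^{2r}}\,(-1)^{r}/(\sqrt5\,\alpha^rF_r)$); the key simplification here is $\sqrt[3]{\alpha^{2r}}/\alpha^r=\alpha^{-r/3}=1/\sqrt[3]{\alpha^r}$, which turns these into $(-1)^{r+1}/(\sqrt[3]{\alpha^r}L_r)$ and $(-1)^{r}/(\sqrt5\,\sqrt[3]{\alpha^r}F_r)=\sqrt5\,(-1)^r/(5\sqrt[3]{\alpha^r}F_r)$, matching the displayed prefactors up to the leftover sign.

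The delicate step---and the only place the bookkeeping can go wrong---is absorbing that leftover factor $(-1)^{r+1}$ (resp. $(-1)^r$) from the prefactor into the bracketed $\arctan$ and $\log$ terms so as to reproduce the sign-free arctangent arguments of the statement. For the arctangent I would use that it is \emph{odd}: its argument carries a factor $\sqrt[3]{y}=(-1)^{r+1}$ (resp. $(-1)^r$), which I pull out as $\arctan((-1)^{r+1}\sqrt3/D)=(-1)^{r+1}\arctan(\sqrt3/D)$, and this cancels the prefactor sign since $((-1)^{r+1})^2=1$, leaving precisely $\arctan(\sqrt3/(2\sqrt[3]{\alpha^{2r}}+(-1)^r))$ (resp. with $-(-1)^r$ in the denominator). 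The remaining copy of $(-1)^{r+1}$ (resp. $(-1)^r$) then multiplies the $\log$-term only; writing $(-1)^{r+1}=-(-1)^r$ and recalling via \eqref{F1}--\eqref{F2} that $\sqrt5\,\alpha^rF_r$ and $\alpha^rL_r$ are exactly the numerators inside the logarithms, I obtain the signs $-(-1)^r$ and $+(-1)^r$ displayed in \eqref{eq.grrm4zm} and \eqref{eq.nrc731g}, completing the verification.
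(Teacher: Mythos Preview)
Your proposal is correct and follows exactly the approach of the paper: set $x=\alpha^{2r}$, $y=(-1)^{r+1}$ (resp.\ $y=(-1)^r$) in identity~\eqref{B} and simplify using \eqref{F1}--\eqref{F2}. Your write-up in fact supplies the sign bookkeeping (oddness of $\arctan$, the $\sqrt[3]{\alpha^{2r}}/\alpha^r=1/\sqrt[3]{\alpha^r}$ reduction) that the paper's one-line proof leaves implicit.
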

\begin{proof}
	Identity \eqref{eq.grrm4zm} is proved by setting $x=\alpha^{2r}$, $y=(-1)^{r + 1}$ in \eqref{B} and making use of identity \eqref{F1}. Identity \eqref{eq.nrc731g} follows from setting $x=\alpha^{2r}$, $y=(-1)^r$ in \eqref{B} and using  \eqref{F2}.
\end{proof}
\begin{example}
	Evaluation of \eqref{eq.grrm4zm} at $r=1$, $r=2$, $r=3$ and \eqref{eq.nrc731g} at $r=2$ and $r=3$, respectively, gives
	\begin{align*}
	\sum_{k = 1}^\infty  {\frac{{\big(\frac{27}{5}\big)^k }}{k\binom{3k}k}}  & = 2\sqrt3\, \frac{{\sqrt[3]{\alpha ^2}  + 1}}{\sqrt[3]{\alpha}}\arctan\! \bigg( {\frac{{\sqrt 3 }}{{2\sqrt[3]{\alpha ^2}  - 1}}} \bigg) + \frac{{\sqrt[3]{\alpha^2}  - 1}}{\sqrt[3]{\alpha}}\log\! \bigg( {\frac{\alpha \sqrt 5}{(\sqrt[3]{\alpha^2} + 1)^3}} \bigg),\\
	\sum_{k = 1}^\infty  {\frac{{ \left(-\frac{27}{5}\right)^k }}{k\binom{3k}k}}  &= \frac{2\sqrt3}{3} \frac{{\sqrt[3]{\alpha ^4}  - 1}}{\sqrt[3]{\alpha ^2} }\arctan\! \bigg( {\frac{{\sqrt 3 }}{{2\sqrt[3]{\alpha ^4}  + 1}}} \bigg) - \frac{{\sqrt[3]{\alpha^4}  + 1}}{3\sqrt[3]{\alpha^2}}\log\! \bigg( {\frac{{\alpha ^2 \sqrt 5 }}{{(\sqrt[3]{\alpha^4}  - 1)^3 }}} \bigg),\\
		\sum_{k = 1}^\infty  {\frac{{\big(\frac{27}{20}\big)^k }}{{k\binom{3k}k}}}  &= \frac{{\sqrt {15} }}{2}\arctan\! \big(\sqrt{15}-\sqrt{12}\big) - \frac{1}{4}\log\! \Big(\frac52\Big),\\
	\sum_{k = 1}^\infty  {\frac{{3^k }}{k\binom{3k}k}}  &= \frac{{2\sqrt {15} }}{5}\frac{{\sqrt[3]{\alpha^4}  + 1}}{\sqrt[3]{\alpha^2} }\arctan\! \bigg( {\frac{{\sqrt 3 }}{{2\sqrt[3]{\alpha^4} - 1}}} \bigg) + \frac{{\sqrt[3]{\alpha^4}  - 1}}{{\sqrt5\sqrt[3]{\alpha ^2} }}\log\! \bigg( {\frac{{3\alpha ^2 }}{{(\sqrt[3]{\alpha ^4}  + 1)^3 }}} \bigg),\\
	\sum_{k = 1}^\infty  {\frac{{\left(-\frac{27}{16}\right)^k }}{{k\binom{3k}k}}}  &= \frac{\sqrt {15} }{5}\arctan\! \bigg( \frac{4\sqrt 3-\sqrt{15}}{11}\bigg) - \log 2.
	\end{align*}
\end{example}
\begin{corollary}
	If $r$ is a positive integer, then
	\begin{align*}
	\sum_{k = 1}^\infty  {\frac{( - 1)^{(r-1)k}\left(\frac{27}{5}\right)^k}{{k  \binom{3k}kF_{3r}^{2k}}}} & = 2\sqrt {15}\, \frac{{F_r }}{{L_{3r} }}\arctan\! \bigg( {\frac{{\sqrt 3 }}{\alpha ^r (\alpha ^r  + L_r )}} \bigg) - ( - 1)^{r} \frac{{L_r }}{{L_{3r} }}\log\! \bigg( {\frac{{F_{3r} }}{{5F_r^3 }}} \bigg),\\
	\sum_{k = 1}^\infty  {\frac{( - 1)^{rk} 27^k }{k\binom{3k}kL_{3r}^{2k} }} & = \frac{2\sqrt {15}}{5} \frac{L_r }{F_{3r}}\arctan\! \bigg( {\frac{\sqrt 3 }{\alpha ^r (\alpha ^r  +  \sqrt 5F_r )}} \bigg)+ ( - 1)^r \frac{{F_r }}{{F_{3r} }}\log\! \bigg( {\frac{{L_{3r} }}{{L_r^3 }}} \bigg).
	\end{align*}
\end{corollary}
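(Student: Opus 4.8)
The plan is to follow exactly the template used for the corollary to Theorem~\ref{Th1}: substitute $3r$ in place of $r$ throughout the two identities \eqref{eq.grrm4zm} and \eqref{eq.nrc731g} of Theorem~\ref{Th4}, and then collapse every occurrence of $\alpha^{3r}$, $\alpha^{6r}$, $F_{3r}$ and $L_{3r}$ back to expressions in $\alpha^r$, $F_r$, $L_r$ by means of the Binet reductions \eqref{F1} and \eqref{F2}. Since $3r$ and $r$ have the same parity, the replacement leaves the sign factors unchanged: $(-1)^{3r}=(-1)^r$, so the summand exponents $(-1)^{k(3r-1)}$ and $(-1)^{k\cdot 3r}$ become $(-1)^{k(r-1)}$ and $(-1)^{kr}$, matching the left-hand sides of the corollary. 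The cube roots simplify at once, $\sqrt[3]{\alpha^{3r}}=\alpha^r$ and $\sqrt[3]{\alpha^{6r}}=\alpha^{2r}$, and the prefactors become $1/(\alpha^r L_{3r})$ in the first line and $\sqrt5/(5\alpha^r F_{3r})$ in the second, so after substitution the right-hand sides are elementary algebraic expressions in $\alpha^r$, $L_{3r}$ (or $F_{3r}$) together with the two building blocks $\alpha^{2r}-(-1)^r$ and $\alpha^{2r}+(-1)^r$.

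First I would rewrite \eqref{F1} and \eqref{F2} in the convenient forms
\[
\alpha^{2r}-(-1)^r=\sqrt5\,\alpha^r F_r,\qquad \alpha^{2r}+(-1)^r=\alpha^r L_r,
\]
which are precisely what appear after the substitution. For the first identity of the corollary, the arctangent coefficient $2\sqrt3\,(\alpha^{2r}-(-1)^r)/(\alpha^r L_{3r})$ collapses to $2\sqrt{15}\,F_r/L_{3r}$, the logarithm coefficient $-(-1)^r(\alpha^{2r}+(-1)^r)/(\alpha^r L_{3r})$ collapses to $-(-1)^r L_r/L_{3r}$, and the cube in the logarithm's denominator, $(\alpha^{2r}-(-1)^r)^3=(\sqrt5\,\alpha^r F_r)^3=5\sqrt5\,\alpha^{3r}F_r^3$, cancels against the numerator $\sqrt5\,\alpha^{3r}F_{3r}$ to leave $F_{3r}/(5F_r^3)$. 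The analogous reductions applied to \eqref{eq.nrc731g} produce $2\sqrt{15}\,L_r/(5F_{3r})$, $(-1)^r F_r/F_{3r}$, and $L_{3r}/L_r^3$ in the three corresponding places, which are exactly the coefficients and logarithm argument recorded in the second identity.

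The only genuine computation is the simplification of the two arctangent arguments, where the factor $2$ in the denominator blocks a direct appeal to \eqref{F1} or \eqref{F2}. Here I would split off one copy of $\alpha^{2r}$: in \eqref{eq.grrm4zm} the denominator becomes $2\alpha^{2r}+(-1)^r=\alpha^{2r}+\big(\alpha^{2r}+(-1)^r\big)=\alpha^{2r}+\alpha^r L_r=\alpha^r(\alpha^r+L_r)$, giving the argument $\sqrt3/\big(\alpha^r(\alpha^r+L_r)\big)$; and in \eqref{eq.nrc731g} the denominator becomes $2\alpha^{2r}-(-1)^r=\alpha^{2r}+\big(\alpha^{2r}-(-1)^r\big)=\alpha^r(\alpha^r+\sqrt5\,F_r)$, giving $\sqrt3/\big(\alpha^r(\alpha^r+\sqrt5\,F_r)\big)$. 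These match the arctangent arguments stated in the corollary, so both identities follow. No separate convergence check is needed: Theorem~\ref{Th4} already holds for every positive integer, in particular when its index is taken to be $3r$.
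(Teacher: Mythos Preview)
Your proposal is correct and follows precisely the approach the paper intends: replace $r$ by $3r$ in identities \eqref{eq.grrm4zm} and \eqref{eq.nrc731g} of Theorem~\ref{Th4} and simplify with \eqref{F1} and \eqref{F2}, exactly paralleling the proof of the corollary to Theorem~\ref{Th1}. Your extra step of splitting $2\alpha^{2r}\pm(-1)^r=\alpha^{2r}+\big(\alpha^{2r}\pm(-1)^r\big)$ to reduce the arctangent denominators is the natural (and only) missing detail, and you handle it correctly.
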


Replacing $x$ with $\cot^2x$ and setting $y=1$ in identity \eqref{B} gives
\begin{equation}\label{F}
\begin{split}
\sum_{k = 1}^\infty  {\frac{{\big(\frac{27}{4}\big)^k }}{{k\binom{3k}k}}\sin ^{2k}2x} & =  \frac{{2\sqrt 3\sin ^2 x}}{\cos 2x}\sqrt[3]{\cot^2x} \,\big(\sqrt[3]{\cot^2x}  + 1\big)\arctan\! \bigg( {\frac{{\sqrt 3 }}{2\sqrt[3]{\cot^2 x}  - 1}} \bigg)\\
&\quad + \frac{{\sin ^2 x}}{\cos 2x}\sqrt[3]{\cot^2 x}\, \big(\sqrt[3]{\cot^2 x}  - 1\big)\log\! \bigg( {\frac{{\csc ^2 x}}{\big(\sqrt[3]{\cot^2 x}  + 1\big)^3 }} \bigg),
\end{split}
\end{equation}
valid for $x\in (0,{\pi}/{4})$. 

At $x={\pi}/{12}$, $x={\pi}/{8}$ and $x={\pi}/{6}$, from \eqref{F} we have
\begin{align*}
\begin{split}
\sum_{k = 1}^\infty  \frac{{\big(\frac{27}{16}\big)^k }}{{k\binom{3k}k}}& = \Big(\sqrt[3]{2+\sqrt 3}+\sqrt[3]{2-\sqrt 3}\Big)\arctan\! \bigg(\frac{{\sqrt 3 }}{2\sqrt[3]{7+4\sqrt 3}  - 1} \bigg)\\
&\quad + \frac{\sqrt 3}{6}\Big(\sqrt[3]{2+\sqrt 3}-\sqrt[3]{2-\sqrt 3}\Big)\log\! \bigg( {\frac{8+4\sqrt 3}{{\big(\sqrt[3]{7+4\sqrt 3}  + 1\big)^3}}} \bigg),
\end{split}\\
\end{align*}
\begin{align*}
\begin{split}
\sum_{k = 1}^\infty  {\frac{{\big(\frac{27}{8}\big)^k }}{k\binom{3k}k}}&= \sqrt 3\, \Big(\sqrt[3]{1+\sqrt 2}-\sqrt[3]{1-\sqrt 2}\Big)\arctan\! \bigg( {\frac{{\sqrt 3 }}{2\sqrt[3]{3+2\sqrt 2}  - 1}} \bigg)\\
&\quad\,+ \frac{1}{2}\Big(\sqrt[3]{1+\sqrt 2} + \sqrt[3]{1-\sqrt 2}\Big)\log\! \bigg( \frac{4+2\sqrt 2}{\big(\sqrt[3]{3+2\sqrt 2}  + 1\big)^3} \bigg),
\end{split}\\
\begin{split}
\sum_{k = 1}^\infty  {\frac{{\big(\frac{81}{16}\big)^k }}{k\binom{3k}k}} & = \sqrt[6]{3^5}\, \big(\sqrt[3]{3} + 1\big)\arctan\! \bigg( {\frac{{\sqrt 3 }}{2\sqrt[3]{3} - 1}} \bigg)+ \frac{\sqrt[3]{3}\,(\sqrt[3]{3} - 1)}{2}\log\! \bigg( {\frac{4}{(\sqrt[3]{3} + 1)^3 }} \bigg).
\end{split}
\end{align*}

\subsection{Results from identity \eqref{C}}

\begin{theorem}
	If $r$ is a positive integer, then
	\begin{align}\label{Th2.6-Fib}
	\begin{split}
	\sum_{k = 1}^\infty & {\frac{{( - 1)^{(k-1)(r - 1)} \big(\frac{27}{5}\big)^k }}{\binom{3k}kF_r^{2k} }} =
	\frac{4}{L_r^2} \\
	&\!\!\!\!\!+ \frac{2\sqrt{15}}{3}\frac{F_r}{L^3_r}
	\!\left(
	2\,\big(\sqrt[3]{\alpha^{2r}}+\sqrt[3]{\beta^{2r}}\big) -(-1)^r\big(\sqrt[3]{\alpha^{4r}}+\sqrt[3]{\beta^{4r}}\big)\right)
	\!\arctan\! \bigg( {\frac{\sqrt 3}{2\sqrt[3]{\alpha ^{2r}}  + ( - 1)^r }}\bigg)	\\
	&\!\!\!\!\!+ \frac{( - 1)^r\sqrt5}{3}\frac{F_r}{L_r^3}
	\!\left(2\big(\sqrt[3]{\alpha^{2r}}-\sqrt[3]{\beta^{2r}}\big) +(-1)^r\big(\sqrt[3]{\alpha^{4r}}-\sqrt[3]{\beta^{4r}}\big)\right)
	\!\log\!\bigg(\frac{\sqrt5\alpha^rF_r}{\big(\sqrt[3]{\alpha^{2r}}-(-1)^r\big)^3}\bigg),
	\end{split}\\
	\begin{split}\label{Th2.6-Luc}
	\sum_{k = 1}^\infty & {\frac{( - 1)^{r(k-1)} 27^k }{\binom{3k}kL_r^{2k}}}=
	\frac{4}{5F_r^2}\\
	&\!\!\!\!+ \frac{2\sqrt{15}}{75}\frac{L_r}{F^3_r}
	\left(
	2\big(\sqrt[3]{\alpha^{2r}}+\sqrt[3]{\beta^{2r}}\big) +(-1)^r\big(\sqrt[3]{\alpha^{4r}}+\sqrt[3]{\beta^{4r}}\big)\right)
	\arctan\! \bigg( {\frac{\sqrt 3}{2\sqrt[3]{\alpha ^{2r}}  - ( - 1)^r }}\bigg)	\\
	&\!\!\!\! - \frac{( - 1)^{r}\sqrt5}{75}\frac{L_r}{F_r^3}
	\left(
	2\big(\sqrt[3]{\alpha^{2r}}-\sqrt[3]{\beta^{2r}}\big) - (-1)^r\big(\sqrt[3]{\alpha^{4r}} - \sqrt[3]{\beta^{4r}}\big)\right)
	\log\!\bigg(\frac{L_r}{\big(\sqrt[3]{\alpha^r}+\sqrt[3]{\beta^r}\big)^3} \bigg).
	\end{split}
	\end{align}
\end{theorem}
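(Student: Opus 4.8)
The plan is to obtain both identities by specializing \eqref{C} in exactly the way the earlier theorems specialized \eqref{A} and \eqref{B}, and then to carry out the (substantial) bookkeeping needed to recast the result in closed Fibonacci/Lucas form. For \eqref{Th2.6-Fib} I would set $x=\alpha^{2r}$ and $y=(-1)^{r+1}$ in \eqref{C}; for \eqref{Th2.6-Luc} I would set $x=\alpha^{2r}$ and $y=(-1)^r$. In each case the first reductions are the familiar ones: by \eqref{F1} and \eqref{F2} one has $\alpha^{2r}+(-1)^{r+1}=\sqrt5\,\alpha^r F_r$ and $\alpha^{2r}+(-1)^r=\alpha^r L_r$, so that for the Fibonacci substitution $x+y=\sqrt5\,\alpha^r F_r$ and $x-y=\alpha^r L_r$, while for the Lucas substitution these two roles are interchanged. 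Using $\sqrt[3]{y}=(-1)^{r+1}$ (resp. $(-1)^r$) and $\sqrt[3]{y^2}=\sqrt[3]{y^4}=1$, the argument of the $\arctan$ and the argument of the $\log$ collapse at once to the expressions displayed on the right-hand sides.

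Next I would simplify the two bracketed cube-root coefficients multiplying $\arctan$ and $\log$. Writing each cube root of a power of $\alpha$ as a fractional power, the prefactor $\tfrac{\sqrt[3]{xy}\,(x+y)}{3(x-y)^3}$ produces an overall $\alpha^{-4r/3}$; distributing this across the polynomials $2\sqrt[3]{xy}\big(\sqrt[3]{x^2}\pm\sqrt[3]{y^2}\big)\pm\big(\sqrt[3]{x^4}+\sqrt[3]{y^4}\big)$ converts the monomials $\alpha^{2r},\alpha^{2r/3},\alpha^{8r/3},1$ into the symmetric pairs $\alpha^{\pm2r/3}$ and $\alpha^{\pm4r/3}$. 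The crucial identification is $\sqrt[3]{\beta^{2r}}=\alpha^{-2r/3}$ and $\sqrt[3]{\beta^{4r}}=\alpha^{-4r/3}$, valid because $\alpha\beta=-1$ forces $\beta^{2r}=\alpha^{-2r}>0$; these turn $\alpha^{2r/3}+\alpha^{-2r/3}$ and $\alpha^{4r/3}+\alpha^{-4r/3}$ (and the analogous differences) into $\sqrt[3]{\alpha^{2r}}+\sqrt[3]{\beta^{2r}}$ and $\sqrt[3]{\alpha^{4r}}+\sqrt[3]{\beta^{4r}}$, exactly as stated.

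The step I expect to be the main obstacle is the consistent reconciliation of three separate sources of sign. First, the $\arctan$ argument produced by \eqref{C} carries the factor $\sqrt[3]{y}=(-1)^{r+1}$ in its numerator, which must be pulled out through the oddness of $\arctan$. Second, the leading term $\tfrac{4xy}{(x-y)^2}$ evaluates to $\tfrac{4(-1)^{r+1}}{L_r^2}$ (resp. $\tfrac{4(-1)^r}{5F_r^2}$), not the clean $\tfrac{4}{L_r^2}$ (resp. $\tfrac{4}{5F_r^2}$) appearing in the statement. Both discrepancies are absorbed by the third device: writing the summand sign as $(-1)^{(k-1)(r-1)}=(-1)^{r+1}(-1)^{k(r-1)}$ (resp. $(-1)^{r(k-1)}=(-1)^r(-1)^{rk}$) pulls a single global factor $(-1)^{r+1}$ (resp. $(-1)^r$) out of the entire right-hand side. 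I would then verify that this one factor simultaneously clears the sign on the leading constant, cancels the $\arctan$-numerator sign, and flips the $\log$-coefficient sign, so that after multiplication every term matches the stated form. The remainder is routine algebra with \eqref{F1} and \eqref{F2}.
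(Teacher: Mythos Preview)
Your approach is correct and essentially identical to the paper's: both identities are obtained by specializing \eqref{C} and invoking \eqref{F1}, \eqref{F2}. The only cosmetic difference is that for \eqref{Th2.6-Fib} the paper substitutes $(x,y)=(\alpha^r,-\beta^r)$ rather than your $(x,y)=(\alpha^{2r},(-1)^{r+1})$, but since \eqref{C} is homogeneous of degree zero in $(x,y)$ and $(\alpha^{2r},(-1)^{r+1})=\alpha^r\cdot(\alpha^r,-\beta^r)$, the two choices are equivalent; your sign bookkeeping via the factorization $(-1)^{(k-1)(r-1)}=(-1)^{r+1}(-1)^{k(r+1)}$ is exactly what is needed.
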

\begin{proof}
	Identities \eqref{Th2.6-Fib} and \eqref{Th2.6-Luc} are proved by setting, respectively, $x=\alpha^{r}$, $y=-\beta^{r}$ and $x=\alpha^{2r}$, $y=(-1)^r$ in \eqref{C} and making use of \eqref{F1} and \eqref{F2}.
\end{proof}
\begin{example}Evaluation of \eqref{Th2.6-Fib} at $r=1,2,3$ and \eqref{Th2.6-Luc} at $r=2,3,6$, respectively, gives
	\begin{align*}
	\sum_{k = 1}^\infty\frac{{\big(\frac{27}{5}\big)^k }}{\binom{3k}k} & = 4 + \frac{2\sqrt{15}}{3}\Big(2\,\big(\sqrt[3]{\alpha ^2}  + \sqrt[3]{\beta^2}\big)+\sqrt[3]{\alpha^4}+\sqrt[3]{\beta^4}\Big)\arctan\! \bigg( {\frac{\sqrt 3 }{2\sqrt[3]{\alpha ^2}  - 1}} \bigg) \\
	&\quad -\frac{\sqrt5}{3}\Big(2\,\big(\sqrt[3]{\alpha^2}  - \sqrt[3]{\beta^2}\big)-\big(\sqrt[3]{\alpha^4}-\sqrt[3]{\beta^4}\big)\Big)
	\log\! \bigg(\frac{\sqrt5\alpha}{(\sqrt[3]{\alpha^2} + 1 )^3}\bigg),\\
	\sum_{k = 1}^\infty  {\frac{{\left(-\frac{27}{5}\right)^k }}{\binom{3k}k}}  & = -\frac{4}{9}- \frac{2\sqrt{15}}{81}\left(2\big(\sqrt[3]{\alpha^4}  + \sqrt[3]{\beta^4}\big)-\big(\sqrt[3]{\alpha^8}+ \sqrt[3]{\beta^8}\big)\right)\arctan\!\bigg( {\frac{{\sqrt 3 }}{2\sqrt[3]{\alpha^4}  + 1}} \bigg) \\
	&\quad - \frac{\sqrt5}{81}\left(2\big(\sqrt[3]{\alpha ^4}  - \sqrt[3]{\beta^4}\big)+\sqrt[3]{\alpha ^8}-\sqrt[3]{\beta^8}\right)
	\log\! \bigg(\frac{\sqrt 5 \alpha^2}{(\sqrt[3]{\alpha^4} - 1)^3}\bigg),
	\end{align*}
	\begin{align*}
	\sum_{k = 1}^\infty  {\frac{{\big(\frac{27}{20}\big)^k }}{\binom{3k}k}}  & = \frac{1}{4} + \frac{13\sqrt{15}}{48}\arctan\! \big(\sqrt{15}-\sqrt{12}\big)-\frac{5}{96}
	\log\Big(\frac52\Big)
	\end{align*}
	and
	\begin{align*}
	\sum_{k = 1}^\infty  \frac{3^k }{\binom{3k}k} &= \frac{4}{5} +\frac{2\sqrt{15}}{25}\left(2\big(\sqrt[3]{\alpha^4}+\sqrt[3]{\beta^4}\big)+\sqrt[3]{\alpha^8}+\sqrt[3]{\beta^8}\right)   \arctan\! \bigg(\frac{\sqrt3}{2\sqrt[3]{\alpha^4}-1}\bigg) \\
	&\quad+ \frac{\sqrt5}{25}\left(2\big(\sqrt[3]{\alpha^4}-\sqrt[3]{\beta^4}\big)- \big(\sqrt[3]{\alpha^8}-\sqrt[3]{\beta^8}\big)\right)\log\! \big(1+\sqrt[3]{\alpha^2}+\sqrt[3]{\beta^2}\big),\\
	\sum_{k = 1}^\infty  {\frac{{\left(-\frac{27}{16}\right)^k }}{\binom{3k}k}}  & = -\frac{1}{5} + \frac{\sqrt{15}}{75}\arctan\! \bigg(\frac{4\sqrt3-\sqrt{15}}{11}\bigg)-\frac{1}{6}
	\log4,\\
	\sum_{k = 1}^\infty  \frac{\left(\frac{1}{12}\right)^k}{\binom{3k}{k}}  &=\frac{1}{80} +\frac{183\sqrt {15}}{3200}\arctan\! \bigg( \frac{\sqrt {15}-\sqrt{12}}{3}\bigg) -\frac{9}{256} \log\!\Big(\frac32\Big).
	\end{align*}
\end{example}

\section{Fibonacci and Lucas series involving inverses of the binomial coefficients $3n\choose n$}

In this section we will derive Fibonacci and Lucas identities which contain reciprocals of the binomial coefficients $\binom{3n}n$.
\begin{lemma}{\it\rm \cite{koshy}}
	If $p$ and $q$ are integers, then
	\begin{align}\label{eq.v57sl4d}
	F_p \alpha ^q  - F_{p + q} & =  - \beta ^p F_q ,
	\\[4pt]
	\label{eq.qkdldvh}
	F_{p + q}  - \beta ^q F_p  &= \alpha ^p F_q.
	\end{align}
\end{lemma}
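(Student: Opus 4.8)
The plan is to prove both identities by direct substitution of the Binet formulas \eqref{binet}, which the excerpt records as valid for every integer subscript. Since each identity is a purely algebraic relation among powers of $\alpha$ and $\beta$, with $\alpha-\beta=\sqrt5$ and $\alpha\beta=-1$, no induction or case analysis is needed: everything reduces to expanding, cancelling, and factoring over the common denominator $\alpha-\beta$.

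For \eqref{eq.v57sl4d} I would write $F_p\alpha^q=(\alpha^p-\beta^p)\alpha^q/(\alpha-\beta)$ and $F_{p+q}=(\alpha^{p+q}-\beta^{p+q})/(\alpha-\beta)$. Subtracting, the two $\alpha^{p+q}$ terms cancel, leaving $(-\beta^p\alpha^q+\beta^{p+q})/(\alpha-\beta)$. Factoring $\beta^p$ out of the numerator gives $-\beta^p(\alpha^q-\beta^q)/(\alpha-\beta)=-\beta^pF_q$, which is the claimed right-hand side.

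For \eqref{eq.qkdldvh} I would proceed identically: expanding $\beta^qF_p=(\beta^q\alpha^p-\beta^{p+q})/(\alpha-\beta)$ and subtracting it from $F_{p+q}$, the two $\beta^{p+q}$ terms cancel and the numerator factors as $\alpha^p(\alpha^q-\beta^q)$, so the quotient is $\alpha^pF_q$. Alternatively, I observe that \eqref{eq.qkdldvh} is exactly the image of \eqref{eq.v57sl4d} under the involution $\alpha\leftrightarrow\beta$, which by \eqref{binet} fixes each $F_j$; hence the second identity follows from the first with no further computation.

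There is no genuine obstacle here: the only point worth a remark is that $p$ and $q$ are allowed to be negative, but because \eqref{binet} is asserted for every integer subscript, all the manipulations above remain valid without adjustment, so nothing beyond the Binet substitution is required.
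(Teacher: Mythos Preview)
Your proof is correct. The paper itself does not prove this lemma at all: it simply states the two identities and cites Koshy~\cite{koshy} as the source, so there is no ``paper's own proof'' to compare against. Your direct Binet-formula verification is the standard and cleanest way to establish these relations, and the observation that \eqref{eq.qkdldvh} is the $\alpha\leftrightarrow\beta$ image of \eqref{eq.v57sl4d} is a nice touch that saves repeating the computation.
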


\subsection{Fibonacci series associated with identity \eqref{A}}

\begin{theorem}\label{thm.fmxmjxw}
	Let $p$ and $q$ be integers such that $p\le -2$, $q\ge 4$ with $q>|p|+1$. Then
	\begin{align}\label{eq.dz9pgfw}
	\begin{split}
	\sum_{k = 1}^\infty&   \Big(\frac{-27F_p F_{p + q}}{F_q^2} \Big)^{\!k}\frac{F_{(2p + q)k}}{k^2 \binom{3k}{k}} \\
	&= \frac{6}{\sqrt 5} \Bigg( {\arctan ^2\! \bigg( \frac{{\sqrt 3\,\sqrt[3]{F_{p + q}}  }}{2\sqrt[3]{\alpha^q F_p}  + \sqrt[3]{F_{p + q}} } \bigg) - \arctan ^2\! \bigg( \frac{\sqrt 3 \sqrt[3]{F_p}}{{2\sqrt[3]{\alpha^qF_{p + q}}  +(-1)^q \sqrt[3]{F_p} }} \bigg)} \!\Bigg)\\
	&\quad - \frac{\sqrt 5}{10}\Bigg( {\log ^2\! \bigg( {\frac{(-1)^pF_q}{ \big(\sqrt[3]{\alpha^pF_{p+q}}  - \sqrt[3]{\alpha^{p+q}F_{p}} \big)^3 }}\bigg) - \log^2\! \bigg( {\frac{{\alpha ^{p + q} F_q }}{\big(\sqrt[3]{\alpha ^q F_{p + q}}  +(-1)^q \sqrt[3]{F_p}\big)^3 }} \bigg )} \!\Bigg),
	\end{split}
	\end{align}
	\begin{align}
	\begin{split}\label{eq.wwh3qpx}
	\sum_{k = 1}^\infty & \Big(\frac{- 27F_pF_{p + q}}{F_q^2} \Big)^k \frac{L_{(2p + q)k}}{k^2\binom{3k}{k}} \\
	&= 6\Bigg( {\arctan ^2\! \bigg( \frac{\sqrt 3\,\sqrt[3]{F_{p + q}}}{2\sqrt[3]{\alpha^q F_p}  + \sqrt[3]{F_{p + q}} } \bigg) + \arctan ^2\! \bigg( \frac{\sqrt 3 \sqrt[3]{F_p}}{2\sqrt[3]{\alpha^qF_{p + q}}  +(-1)^q \sqrt[3]{F_p} } \bigg)} \!\Bigg)\\
	&\quad - \frac{1}{2}
	\Bigg( {\log ^2\! \bigg( {\frac{(-1)^p F_q}{\big(\sqrt[3]{\alpha^pF_{p+q}}  - \sqrt[3]{\alpha^{p+q}F_{p}} \big)^3 }}\bigg) + \log^2\! \bigg( {\frac{{\alpha ^{p + q} F_q }}{\big(\sqrt[3]{\alpha ^q F_{p + q}}  -(-1)^q \sqrt[3]{F_p}\big)^3 }} \bigg )} \!\Bigg).
	\end{split}
	\end{align}
\end{theorem}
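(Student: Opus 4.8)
The plan is to reduce both identities to two applications of the master identity \eqref{A} — one producing the powers of $\alpha$ and one producing the powers of $\beta$ — and then to recombine them through the Binet formulas \eqref{binet}. Writing $F_{(2p+q)k} = (\alpha^{(2p+q)k} - \beta^{(2p+q)k})/\sqrt5$ and $L_{(2p+q)k} = \alpha^{(2p+q)k} + \beta^{(2p+q)k}$, each series on the left splits into an ``$\alpha$-series'' and a ``$\beta$-series'' of exactly the shape $\sum_k z^k/(k^2\binom{3k}{k})$ summed by \eqref{A}. Identity \eqref{eq.dz9pgfw} will then be $1/\sqrt5$ times the difference of the two evaluations, and \eqref{eq.wwh3qpx} their sum; this already explains the prefactors $6/\sqrt5,\ -\sqrt5/10$ versus $6,\ -\tfrac12$.

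First I would locate the correct substitutions. For the $\alpha$-series I set $x = \alpha^q F_p$ and $y = -F_{p+q}$; the Lemma identity \eqref{eq.v57sl4d} gives $x+y = \alpha^q F_p - F_{p+q} = -\beta^p F_q$, and since $\beta^{2p} = \alpha^{-2p}$ one finds $\frac{27xy}{(x+y)^2} = -\frac{27F_pF_{p+q}}{F_q^2}\,\alpha^{2p+q}$, precisely the argument carrying $\alpha^{(2p+q)k}$. For the $\beta$-series I instead set $x = F_{p+q}$ and $y = -\beta^q F_p$; now \eqref{eq.qkdldvh} gives $x+y = F_{p+q} - \beta^q F_p = \alpha^p F_q$, and the same computation yields $\frac{27xy}{(x+y)^2} = -\frac{27F_pF_{p+q}}{F_q^2}\,\beta^{2p+q}$. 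Feeding these two values of $z$ into \eqref{A} produces the four arctangent-squared and logarithm-squared terms appearing on the right-hand sides.

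The bulk of the work is the cosmetic simplification of the cube-root arguments into the stated symmetric form. Using $\beta^q = (-1)^q\alpha^{-q}$ I would rewrite $\sqrt[3]{y}$ for each substitution as a real cube root times a power $\alpha^{\mp q/3}$ and a sign $(-1)^{q+1}$, then clear the fractional power of $\alpha$ from numerator and denominator. For the $\alpha$-series the factor $\alpha^{p/3}$ is pulled through to convert $\sqrt[3]{\alpha^q F_p} - \sqrt[3]{F_{p+q}}$ into $-\alpha^{-p/3}\big(\sqrt[3]{\alpha^pF_{p+q}} - \sqrt[3]{\alpha^{p+q}F_p}\big)$, matching the first logarithm, with the analogous manipulation for the arctangents; the $\beta$-series is handled the same way. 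Throughout, the evenness of $\arctan^2$ and $\log^2$ lets me discard the overall signs that the cube roots of $(-1)^{q+1}$ introduce, which is what reconciles the $(-1)^q$ bookkeeping in the second arctangent and logarithm.

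I expect the two genuine obstacles to be, first, checking that both arguments $-\frac{27F_pF_{p+q}}{F_q^2}\alpha^{2p+q}$ and $-\frac{27F_pF_{p+q}}{F_q^2}\beta^{2p+q}$ lie in the region of validity of \eqref{A} — equivalently that each ratio $x/y$ satisfies $x/y\ge 1$ or $x/y\le -(\sqrt2+1)^2$ — and, second, ensuring that the Fibonacci numbers entering the cube roots carry the sign needed for the intended real cube root. This is exactly where the hypotheses $p\le -2$, $q\ge 4$, $q>|p|+1$ enter: they force $p+q\ge 2$ (so $F_{p+q}>0$ and $F_q>0$) and fix the parities and magnitudes so that $|z|\le 27/4$ holds for both substitutions. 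Once this domain verification is in place, the remainder is the routine Binet splitting and recombination of the second and third steps.
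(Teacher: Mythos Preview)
Your approach is essentially identical to the paper's: the same two substitutions $(x,y)=(F_p\alpha^q,-F_{p+q})$ and $(x,y)=(F_{p+q},-\beta^q F_p)$ into identity \eqref{A}, the same use of \eqref{eq.v57sl4d} and \eqref{eq.qkdldvh} to simplify $x+y$, and the same recombination via the Binet formulas. The paper's proof is terser and omits the cosmetic cube-root simplifications and the domain check you outline, but the strategy matches.
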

\begin{proof}
	Set $(x,y) = (F_p \alpha ^q, - F_{p + q} )$ in identity \eqref{A} and use \eqref{eq.v57sl4d} to obtain
	\begin{equation}\label{eq.du3gdpb}
	\begin{split}
	\sum_{k = 1}^\infty& \Big(\frac{- 27F_p F_{p + q}}{F_q^2} \Big)^{\!k}   {\frac{\alpha ^{(2p + q)k}}{{k^2 \binom{3k}k}} }\\
	&\quad = 6\arctan ^2\! \bigg( \frac{\sqrt 3 \sqrt[3]{F_{p + q}} }{{2\sqrt[3]{\alpha ^qF_p}  + \sqrt[3]{F_{p + q}} }} \bigg) - \frac{1}{2}\log ^2\! \bigg(\frac{(-1)^{p}F_q}{\big(\sqrt[3]{\alpha^pF_{p+q}}  - \sqrt[3]{\alpha^{p+q}F_{p}}\big)^3 }\bigg).
	\end{split}
	\end{equation}
	
	Similarly, $(x,y) = (F_{p + q} , - \beta ^q F_p )$ in identity \eqref{A} and the use of \eqref{eq.qkdldvh} gives
	\begin{equation}\label{eq.i0h0a2h}
	\begin{split}
	\sum_{k = 1}^\infty & \Big(\frac{- 27F_p F_{p + q}}{F_q^2} \Big)^{\!k}   {\frac{\beta^{(2p + q)k}}{{k^2 \binom{3k}k}} } \\
	&\quad=
	6\arctan ^2\! \bigg( \frac{\sqrt 3 \sqrt[3]{F_{p}} }{{2\sqrt[3]{\alpha ^qF_{p+q}} +(-1)^q\sqrt[3]{F_{p}} }} \bigg) - \frac{1}{2}\log ^2\! \bigg(\frac{\alpha^{p+q}F_q}{\big(\sqrt[3]{\alpha^qF_{p+q}}  -(-1)^q \sqrt[3]{F_{p}}\big)^3 }\bigg).
	\end{split}
	\end{equation}
	
	Identities \eqref{eq.dz9pgfw} and \eqref{eq.wwh3qpx} follow from the subtraction and addition of \eqref{eq.du3gdpb} and \eqref{eq.i0h0a2h} with the use of the Binet formulas \eqref{binet}.
\end{proof}
\begin{example} At $p=-2$ and $q=5$ from \eqref{eq.dz9pgfw} and \eqref{eq.wwh3qpx} we have  the following series:
	\begin{align*}
	\sum_{k = 1}^\infty    \frac{\left(\frac{54}{25} \right)^{k}\!F_k  }{{k^2 \binom{3k}k}} &=
	\frac{6\sqrt5}{5}\left(\arctan ^2\! \bigg( \frac{\sqrt 3 \sqrt[3]{2}}{{2\sqrt[3]{\alpha^5}} -\sqrt[3]{2} } \bigg) - 
	\arctan ^2\! \bigg( \frac{\sqrt 3}{{2\sqrt[3]{2\alpha^5}}+1} \bigg)\!\right)\\
	&\quad+
	\frac{\sqrt5}{10}\left(\log ^2\! \bigg(\frac{5\alpha^{3}}{\big(\sqrt[3]{2\alpha^5}  -1\big)^3 }\bigg)-\log ^2\! \bigg(\frac{5\alpha^{2}}{\big(\sqrt[3]{\alpha^5}  + \sqrt[3]{2}\big)^3 }\bigg)\!\right)\!,\\
	\sum_{k = 1}^\infty    \frac{\left(\frac{54}{25} \right)^{k}\!L_k }{{k^2 \binom{3k}k}} &=
	{6}\left(\arctan ^2\! \bigg( \frac{\sqrt 3 \sqrt[3]{2}}{{2\sqrt[3]{\alpha^5}} -\sqrt[3]{2} } \bigg) + 
	\arctan ^2\! \bigg( \frac{\sqrt 3}{{2\sqrt[3]{2\alpha^5}}+1} \bigg)\!\right)\\
	&\quad -
	\frac{1}{2}\left(\log ^2\! \bigg(\frac{5\alpha^{3}}{\big(\sqrt[3]{2\alpha^5}  -1\big)^3 }\bigg) + \log ^2\! \bigg(\frac{5\alpha^{2}}{\big(\sqrt[3]{\alpha^5}  + \sqrt[3]{2}\big)^3 }\bigg)\!\right)\!.
	\end{align*}
\end{example}

\subsection{Fibonacci series associated with identity \eqref{B}}

\begin{theorem}\label{Th9}
	Let $p$ and $q$ be integers such that $p\le -2$, $q\ge4$, and $q>|p|+1$. Then   
	\begin{align*}
	\begin{split}
	\!\!\!\!\!\!\!\!\!\frac{\sqrt5}{\sqrt[3]{F_pF_{p+q}}}&\sum_{k = 1}^\infty \Big(\frac{-27F_p F_{p + q}}{F_q^2} \Big)^{\!k} \frac{F_{(2p + q)k}}{k\binom{3k}{k}} \\
	&\!\!\!\!\!\!\!\!\!\!\!\!\!  = {2\sqrt{3}}\Bigg(\!A^{-}_{\alpha} \arctan\! \bigg( \frac{\sqrt 3\,\sqrt[3]{F_{p + q}}}{2\sqrt[3]{\alpha^q F_p}  + \sqrt[3]{F_{p + q}} } \bigg) + A^{-}_{\beta} \arctan\! \bigg( \frac{\sqrt 3\sqrt[3]{F_p}}{{2(-1)^q\sqrt[3]{\alpha^qF_{p+q}} + \sqrt[3]{F_{p}} }} \bigg)\!\Bigg)\\
	&\!\!\!\!\!\!\!\!\!\!\!\!\! \quad -\Bigg(\! A^{+}_{\alpha}\log\!\bigg( {\frac{\beta^pF_q}{\big(\sqrt[3]{F_{p + q}}- \sqrt[3]{\alpha^q F_p}  \big)^3 }}\bigg) - A^+_{\beta}\log\!\bigg( \frac{\alpha^p F_q}{\big(\sqrt[3]{F_{p + q}}  - \sqrt[3]{\beta^qF_p}\big)^3} \bigg)\! \Bigg),
	\end{split}\\
	\begin{split}
	\frac{1}{\sqrt[3]{F_pF_{p+q}}}\sum_{k = 1}^\infty& \Big(\frac{-27F_p F_{p + q}}{F_q^2} \Big)^{\!k} \frac{L_{(2p + q)k}}{k\binom{3k}{k}}\\ 
	&\!\!\!\!\!\!\!\!\!\!\!\!\!  = 2\sqrt 3\, \Bigg(\!A^{-}_{\alpha} \arctan\! \bigg( \frac{\sqrt 3\,\sqrt[3]{F_{p + q}}}{2\sqrt[3]{\alpha^q F_p}  + \sqrt[3]{F_{p + q}} } \bigg) -A^{-}_{\beta} \arctan\! \bigg( \frac{\sqrt 3 \sqrt[3]{F_p}}{{2(-1)^q\sqrt[3]{\alpha^qF_{p+q}} + \sqrt[3]{F_{p}} }} \bigg)\! \Bigg)\\
	&\!\!\!\!\!\!\!\!\!\!\!\!\!\quad - \Bigg(\! A^{+}_{\alpha}\log\!\bigg( {\frac{\beta^pF_q}{\big(\sqrt[3]{F_{p + q}}- \sqrt[3]{\alpha^q F_p}  \big)^3 }}\bigg) + A^+_{\beta}\log\!\bigg( \frac{\alpha^p F_q}{\big(\sqrt[3]{F_{p + q}}  - \sqrt[3]{\beta^qF_p}\big)^3} \bigg)\! \Bigg),
	\end{split}
	\end{align*}
	where 
	$$
	A^{\pm}_s=\sqrt[3]{s^q}\,\frac{\sqrt[3]{s^q F_p}\pm\sqrt[3]{F_{p+q}}}{s^qF_p+F_{p+q}}.
	$$
\end{theorem}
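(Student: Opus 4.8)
The plan is to mirror the proof of Theorem~\ref{thm.fmxmjxw}, but starting from identity~\eqref{B} instead of \eqref{A}. The two Binet exponentials $\alpha^{(2p+q)k}$ and $\beta^{(2p+q)k}$ are each produced by a separate specialization of \eqref{B}, and the Fibonacci- and Lucas-weighted sums are then recovered by subtracting and adding the two resulting closed forms, exactly as in the proof of \eqref{eq.dz9pgfw} and \eqref{eq.wwh3qpx}.

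First I would set $(x,y)=(F_p\alpha^q,-F_{p+q})$ in \eqref{B}. By \eqref{eq.v57sl4d} we have $x+y=-\beta^pF_q$, so $(x+y)^2=\beta^{2p}F_q^2$, while $xy=-\alpha^qF_pF_{p+q}$. Using $\alpha\beta=-1$ to write $\alpha^q/\beta^{2p}=\alpha^{2p+q}$, the weight collapses to $(27xy)^k/(x+y)^{2k}=\big({-27F_pF_{p+q}}/{F_q^2}\big)^k\alpha^{(2p+q)k}$, so the left side of \eqref{B} becomes $S_\alpha:=\sum_{k\ge1}\big({-27F_pF_{p+q}}/{F_q^2}\big)^k\alpha^{(2p+q)k}/\big(k\binom{3k}k\big)$. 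On the right side $\sqrt[3]{x}=\sqrt[3]{\alpha^qF_p}$ and $\sqrt[3]{y}=-\sqrt[3]{F_{p+q}}$, so the prefactor $\sqrt[3]{xy}/(x-y)$ carries the factor $\sqrt[3]{F_pF_{p+q}}$; dividing through by this factor is precisely what produces the $1/\sqrt[3]{F_pF_{p+q}}$ standing in front of the sum in the statement and leaves the coefficients $A^{\pm}_\alpha$. Repeating the computation with $(x,y)=(F_{p+q},-\beta^qF_p)$ and \eqref{eq.qkdldvh}, for which $x+y=\alpha^pF_q$ and $xy=-\beta^qF_pF_{p+q}$, and using $\beta^q/\alpha^{2p}=\beta^{2p+q}$, gives the companion closed form $S_\beta$ for the same weight with $\beta^{(2p+q)k}$ replacing $\alpha^{(2p+q)k}$, now with coefficients $A^{\pm}_\beta$.

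Finally I would combine the two evaluations through the Binet formulas \eqref{binet}. Since $F_{(2p+q)k}=(\alpha^{(2p+q)k}-\beta^{(2p+q)k})/\sqrt5$ and $L_{(2p+q)k}=\alpha^{(2p+q)k}+\beta^{(2p+q)k}$, the Fibonacci-weighted sum equals $(S_\alpha-S_\beta)/\sqrt5$ and the Lucas-weighted sum equals $S_\alpha+S_\beta$. Multiplying the former by $\sqrt5/\sqrt[3]{F_pF_{p+q}}$ and the latter by $1/\sqrt[3]{F_pF_{p+q}}$ produces the two displayed identities, the subtraction and addition distributing over the $A^{-}$-arctangent and $A^{+}$-logarithm blocks so as to generate the two sign patterns seen in the statement.

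The hard part will be the bookkeeping of signs coming from real cube roots, not any analytic difficulty. Two points demand care. First, after the $\alpha$-substitution the arctangent argument emerges as $-\sqrt3\sqrt[3]{F_{p+q}}/\big(2\sqrt[3]{\alpha^qF_p}+\sqrt[3]{F_{p+q}}\big)$, i.e.\ with the sign opposite to the one in the statement; this is absorbed by the oddness of $\arctan$, and the resulting sign cancels against the sign carried by $\sqrt[3]{xy}$ in the prefactor, leaving the clean positive argument. Second, for the $\beta$-evaluation the argument naturally comes out in terms of $\sqrt[3]{\beta^qF_p}$ and must be rewritten in the stated form built from $\sqrt[3]{\alpha^qF_{p+q}}$; this is achieved by multiplying numerator and denominator by $\sqrt[3]{\alpha^q}$, invoking $\sqrt[3]{\alpha^q\beta^q}=\sqrt[3]{(-1)^q}=(-1)^q$, and then by $(-1)^q$, which converts the $\beta^q$-radicands into $\alpha^q$-radicands at the cost of exactly the factors $(-1)^q$ appearing in the statement. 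The hypotheses $p\le-2$, $q\ge4$, $q>|p|+1$ guarantee that both specializations lie in the admissible range of \eqref{B} and that all radicands are real, so every such manipulation is legitimate.
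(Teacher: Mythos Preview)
Your proposal is correct and follows exactly the approach the paper intends: it reproduces the proof of Theorem~\ref{thm.fmxmjxw} with identity~\eqref{B} in place of~\eqref{A}, using the same two substitutions $(x,y)=(F_p\alpha^q,-F_{p+q})$ and $(x,y)=(F_{p+q},-\beta^qF_p)$ and then combining via the Binet formulas. The paper itself omits the details, and your discussion of the sign bookkeeping (oddness of $\arctan$, the rewriting of the $\beta$-radicands via $\sqrt[3]{\alpha^q\beta^q}=(-1)^q$) is a faithful account of what that omitted computation requires.
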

\begin{proof}
	The proof is similar to that one given for Theorem \ref{thm.fmxmjxw} and omitted.
\end{proof}
\begin{example} At $p=-2$ and $q=5$ from Theorem \ref{Th9} we obtain the following series:
	\begin{align*}
	\frac{1}{\sqrt[3]{2\alpha^5}}\sum_{k = 1}^\infty\frac{\left(\frac{54}{25} \right)^{k}F_k }{k \binom{3k}k} &\\
	&\!\!\!\!\!\!\!\!\!\!\!\!\!\!\!\!=
	\frac{2\sqrt{15}}{5}
	 \left(\frac{\sqrt[3]2+\sqrt[3]{\alpha^5}}{2-\alpha^5}\arctan\!\bigg(\frac{\sqrt3}{1-\sqrt[3]{4\alpha^5}}\bigg)+\frac{1-\sqrt[3]{2\alpha^5}}{1+2\alpha^5}\arctan\!\bigg( \frac{\sqrt 3}{2\sqrt[3]{2\alpha^5}+1} \bigg)\!\right)\\
	&\!\!\!\!\!\!\!\!\!\!\!\!\!\!\!\!\quad+
	\frac{\sqrt5}{5}
	\left(\frac{\sqrt[3]{2}-\sqrt[3]{\alpha^5}}{2-\alpha^5}
	\log\! \bigg(\frac{5\alpha^2}{\big(\sqrt[3]{2}+\sqrt[3]{\alpha^5}\big)^3}\bigg)+\frac{1+\sqrt[3]{2\alpha^5}}{1+2\alpha^5}\log\! \bigg(\frac{5\alpha^{3}}{\big(\sqrt[3]{2\alpha^5}  - 1\big)^3}\bigg)\!\right)\!,\\
	\frac{1}{\sqrt[3]{2\alpha^5}}\sum_{k = 1}^\infty     \frac{\left(\frac{54}{25} \right)^{k}L_k }{k \binom{3k}k} &\\
	&\!\!\!\!\!\!\!\!\!\!\!\!\!\!\!\!\!\!=
	2\sqrt{3} \left(\frac{\sqrt[3]2+\sqrt[3]{\alpha^5}}{2-\alpha^5}\arctan\!\bigg(\frac{\sqrt3}{1-\sqrt[3]{4\alpha^5}}\bigg)-\frac{1-\sqrt[3]{2\alpha^5}}{1+2\alpha^5}\arctan\!\bigg( \frac{\sqrt 3}{2\sqrt[3]{2\alpha^5}+1} \bigg)\!\right)\\
	&\!\!\!\!\!\!\!\!\!\!\!\!\!\!\!\!\!\!\quad+
	\frac{\sqrt[3]{2}-\sqrt[3]{\alpha^5}}{2-\alpha^5}
	\log\! \bigg(\frac{5\alpha^2}{\big(\sqrt[3]{2}+\sqrt[3]{\alpha^5}\big)^3}\bigg)-\frac{1+\sqrt[3]{2\alpha^5}}{1+2\alpha^5}\log\! \bigg(\frac{5\alpha^{3}}{\big(\sqrt[3]{2\alpha^5}  - 1\big)^3}\bigg).
	\end{align*}
\end{example}

\subsection{Fibonacci series associated with identity (C)}

\begin{theorem}\label{Th10}
	Let $p$ and $q$ be integers such that $p\le -2$, $q\ge4$, and $|q|>|p|+1$. Then
	\begin{align*}
	\begin{split}
	&\!\!\!\!\!\!\!\!\!\!\frac{1}{F_q\sqrt[3]{F_pF_{p+q}}}\sum_{k = 1}^\infty (-1)^{k-p}\Big(\frac{27F_p F_{p + q}}{F_q^2} \Big)^{\!k} \frac{F_{(2p + q)k}}{\binom{3k}{k}}\\
	& \!\!\!\!\!\!= \frac{4(-1)^{p-1}\sqrt[3]{F^2_{p+q}F^2_p}\big(F^2_{p+q}-(-1)^qF_p^2\big)}  {(F_{p+q}+\alpha^qF_p)^2(F_{p+q}+\beta^qF_p)^2} \\
	& \, - \frac{2\sqrt{15}}{15}\Bigg(B^{+}_{\alpha} \arctan\! \bigg( \frac{\sqrt 3\,\sqrt[3]{F_{p + q}}}{2\sqrt[3]{\alpha^q F_p}  + \sqrt[3]{F_{p + q}} } \bigg) + B^{+}_{\beta} \arctan\!\bigg( \frac{\sqrt 3\sqrt[3]{F_p}}{{2(-1)^q\sqrt[3]{\alpha^qF_{p+q}} + \sqrt[3]{F_{p}} }} \bigg) \!\Bigg)\\
	& \,+\frac{\sqrt5}{15} \Bigg(B^{-}_{\alpha}
	\log\!\bigg( {\frac{(-1)^p F_q}{\big(\sqrt[3]{\alpha^pF_{p + q}}- \sqrt[3]{\alpha^{p+q} F_p}  \big)^3 }}\bigg) -	 B^{-}_{\beta}\log\!\bigg( \frac{\alpha^{p+q} F_q}{\big(\sqrt[3]{\alpha^qF_{p + q}}  -(-1)^q \sqrt[3]{F_p}\big)^3} \bigg) \!\Bigg),
	\end{split}
	\end{align*}
	\begin{align*}
	\begin{split}
	&\!\!\!\!\!\!\!\!\!\!\frac{1}{F_q\sqrt[3]{F_pF_{p+q}}}\sum_{k = 1}^\infty (-1)^{k-p}\Big(\frac{27F_p F_{p + q}}{F_q^2} \Big)^{\!k} \frac{L_{(2p + q)k}}{\binom{3k}{k}}\\
	&=\frac{4(-1)^{p-q-1}\sqrt[3]{F^2_{p+q}F^2_p}}{F_q}\cdot  \frac{F^2_pL_q+(-1)^qF^2_{p+q}L_q+4F_pF_{p+q}}{(F_{p+q}+\alpha^qF_p)^2 (F_{p+q}+\beta^qF_p)^2}\\ 
	& \quad  - \frac{2}{\sqrt3} 
	\Bigg(B^{+}_{\alpha} \arctan\! \bigg( \frac{\sqrt 3\,\sqrt[3]{F_{p + q}}}{2\sqrt[3]{\alpha^q F_p}  + \sqrt[3]{F_{p + q}} } \bigg) - B^{+}_{\beta} \arctan \!\bigg( \frac{\sqrt 3 \sqrt[3]{F_p}}{{2(-1)^q\sqrt[3]{\alpha^qF_{p+q}} + \sqrt[3]{F_{p}} }} \bigg) \!\Bigg)\\
	&\quad + \frac{1}{3}\Bigg( B^{-}_{\alpha}\log\!\bigg( {\frac{(-1)^pF_q}{\big(\sqrt[3]{\alpha^pF_{p + q}}- \sqrt[3]{\alpha^{p+q} F_p}  \big)^3 }}\bigg) + B^{-}_{\beta}\log\!\bigg( \frac{\alpha^{p+q} F_q}{\big(\sqrt[3]{\alpha^qF_{p + q}}  -(-1)^q \sqrt[3]{	F_p}\big)^3} \bigg) \!\Bigg),
	\end{split}
	\end{align*}
	where 
	$$
	B^{\pm}_s=\frac{\sqrt[3]{s^{q-3p}}}{(s^qF_p + F_{p+q})^3}\bigg(\sqrt[3]{s^{4q}F_p^4}\pm\sqrt[3]{F^4_{p+q}}\mp 2\sqrt[3]{s^qF_p F_{p+q}}\,\Big(\sqrt[3]{s^{2q}F^2_p}\pm\sqrt[3]{F^2_{p+q}}\Big)\!\bigg).
	$$
\end{theorem}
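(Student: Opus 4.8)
The plan is to reproduce, in spirit, the proof of Theorem~\ref{thm.fmxmjxw}, but feeding the two substitutions into identity~\eqref{C} rather than \eqref{A}. Since \eqref{C} arises from \eqref{A} by differentiating twice in $x$, its left-hand side is precisely the series carrying $\binom{3k}{k}$ alone in the denominator, which is the type occurring here; and the same two substitutions that isolated $\alpha^{(2p+q)k}$ and $\beta^{(2p+q)k}$ in Theorem~\ref{thm.fmxmjxw} will isolate them again in \eqref{C}. The Fibonacci series will then come from a difference of the two evaluations and the Lucas series from their sum, via the Binet formulas~\eqref{binet}.

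First I would set $(x,y)=(F_p\alpha^q,\,-F_{p+q})$ in \eqref{C}. By \eqref{eq.v57sl4d} one has $x+y=F_p\alpha^q-F_{p+q}=-\beta^pF_q$, while $x-y=F_p\alpha^q+F_{p+q}$ and $xy=-F_pF_{p+q}\alpha^q$; using $\alpha\beta=-1$ gives $27xy/(x+y)^2=(-27F_pF_{p+q}/F_q^2)\,\alpha^{2p+q}$, so the left-hand side is $\sum_{k\ge1}(-27F_pF_{p+q}/F_q^2)^k\alpha^{(2p+q)k}/\binom{3k}{k}$. Call the resulting identity $(\star_\alpha)$. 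Symmetrically, $(x,y)=(F_{p+q},\,-\beta^qF_p)$ together with \eqref{eq.qkdldvh} gives $x+y=\alpha^pF_q$ and $27xy/(x+y)^2=(-27F_pF_{p+q}/F_q^2)\,\beta^{2p+q}$, producing the companion identity $(\star_\beta)$ for the $\beta^{(2p+q)k}$-series. In assembling $(\star_\beta)$ every occurrence of $\beta^q$ must be rewritten through $\beta^q=(-1)^q\alpha^{-q}$ to match the $\alpha$-based $\arctan$ and $\log$ arguments recorded in the statement.

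Next I would combine the two. Writing $(-1)^{k-p}(27F_pF_{p+q}/F_q^2)^k=(-1)^p(-27F_pF_{p+q}/F_q^2)^k$ and invoking $\sqrt5\,F_{(2p+q)k}=\alpha^{(2p+q)k}-\beta^{(2p+q)k}$ and $L_{(2p+q)k}=\alpha^{(2p+q)k}+\beta^{(2p+q)k}$, the Fibonacci identity is $(\star_\alpha)-(\star_\beta)$ and the Lucas identity is $(\star_\alpha)+(\star_\beta)$, each multiplied by the overall factor $(-1)^p\big/\!\big(F_q\sqrt[3]{F_pF_{p+q}}\big)$, with an extra $1/\sqrt5$ present in the Fibonacci case (from Binet) and absent in the Lucas case. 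The two rational terms $4xy/(x-y)^2$ combine cleanly: over the common denominator $(F_{p+q}+\alpha^qF_p)^2(F_{p+q}+\beta^qF_p)^2$, the numerator of the difference collapses, via $\alpha^q\beta^q=(-1)^q$ and $\alpha^q-\beta^q=\sqrt5F_q$, to $\sqrt5F_q\big(F_{p+q}^2-(-1)^qF_p^2\big)$, while the numerator of the sum collapses to $L_q F_p^2+(-1)^qL_qF_{p+q}^2+4(-1)^qF_pF_{p+q}$; after clearing $\sqrt5$, $F_q$, and the cube-root factor $\big(F_pF_{p+q}\big)/\sqrt[3]{F_pF_{p+q}}=\sqrt[3]{F_p^2F_{p+q}^2}$ these reproduce exactly the stated algebraic prefactors, the signs $(-1)^{p-1}$ and $(-1)^{p-q-1}$ emerging from the factored $(-1)^q$ in the Lucas numerator.

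The main obstacle is collecting the transcendental coefficients into the compact quantities $B^{\pm}_s$. In $(\star_\alpha)$ and $(\star_\beta)$ the $\arctan$ and $\log$ coefficients inherited from \eqref{C} carry the prefactor $\tfrac13\sqrt[3]{xy}\,(x+y)(x-y)^{-3}$ multiplied, respectively, by $2\sqrt[3]{xy}\big(\sqrt[3]{x^2}+\sqrt[3]{y^2}\big)+\sqrt[3]{x^4}+\sqrt[3]{y^4}$ and $2\sqrt[3]{xy}\big(\sqrt[3]{x^2}-\sqrt[3]{y^2}\big)-\sqrt[3]{x^4}+\sqrt[3]{y^4}$. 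The governing simplifications are $\sqrt[3]{xy}\big/\sqrt[3]{F_pF_{p+q}}=-\sqrt[3]{\alpha^q}$ (respectively $-\sqrt[3]{\beta^q}$), $(x+y)/F_q=-\beta^p$ (respectively $\alpha^p$), and $(-1)^p\beta^p=(-\beta)^p=\alpha^{-p}$, which fold the prefactor into $\pm\tfrac{2\sqrt{15}}{15}\,\sqrt[3]{\alpha^{q-3p}}\,(\alpha^qF_p+F_{p+q})^{-3}$ in the Fibonacci case; multiplying by the polynomial — whose $\sqrt[3]{xy}$-terms change sign because $xy<0$ forces $\sqrt[3]{xy}=-\sqrt[3]{\alpha^qF_pF_{p+q}}$ — then reproduces the parenthetical parts of $B^{+}_\alpha$ (from the $\arctan$-polynomial) and $B^{-}_\alpha$ (from the $\log$-polynomial), and analogously $B^{\pm}_\beta$. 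The only genuinely delicate points are this sign bookkeeping on real cube roots and the oddness of $\arctan$, which flips the sign of the negative argument $-\sqrt3\sqrt[3]{F_{p+q}}\big/\!\big(2\sqrt[3]{\alpha^qF_p}+\sqrt[3]{F_{p+q}}\big)$ arising from $\sqrt[3]{y}=\sqrt[3]{-F_{p+q}}=-\sqrt[3]{F_{p+q}}$ and thereby fixes the leading signs $\mp\tfrac{2\sqrt{15}}{15}$, $\tfrac{\sqrt5}{15}$, $\mp\tfrac{2}{\sqrt3}$, $\tfrac13$; everything else is the same addition and subtraction already carried out for Theorem~\ref{thm.fmxmjxw}.
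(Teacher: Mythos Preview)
Your proposal is correct and follows exactly the approach the paper intends: the paper's own proof is the single sentence ``The proof is similar to the previous two proofs,'' meaning precisely the substitutions $(x,y)=(F_p\alpha^q,-F_{p+q})$ and $(x,y)=(F_{p+q},-\beta^qF_p)$ into identity~\eqref{C} followed by the Binet-formula combination, which is what you carry out in detail. One small slip: in your Lucas-case rational numerator you interchanged the roles of $F_p^2$ and $F_{p+q}^2$ (the correct intermediate is $L_qF_{p+q}^2+(-1)^qL_qF_p^2+4(-1)^qF_pF_{p+q}$), but after factoring out $(-1)^q$ this indeed reproduces the theorem's $F_p^2L_q+(-1)^qF_{p+q}^2L_q+4F_pF_{p+q}$ and the sign $(-1)^{p-q-1}$, so the argument goes through.
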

\begin{proof} 	The proof is similar to the previous two proofs.	
\end{proof}
\begin{example} At $p=-2$ and $q=5$ from Theorem \ref{Th10}  we obtain the following series:
	\begin{align*}
	\sum_{k = 1}^\infty\frac{\left(\frac{54}{25} \right)^{k}\!F_k }{k \binom{3k}k} &= \frac{200}{361}+\frac{10\sqrt[3]{2\alpha^{11}}}{\sqrt{15}}
	\left(\frac{\sqrt[3]{16}(1+\alpha^5)+\sqrt[3]{\alpha^5}(4+\alpha^5)}{(5\alpha+1)^3} \arctan\!\bigg(\frac{\sqrt3}{2\sqrt[3]{\alpha^5/2}-1}\bigg)\right.\\
	&\left.\quad+\,
	\frac{\sqrt[3]{16\alpha^5}(\alpha^5-1)+1-4\alpha^5}
	{(\alpha-5)^3\alpha^{11}}\arctan\!\bigg( \frac{\sqrt 3}{2\sqrt[3]{2\alpha^5}+1} \bigg)\!\right)\\
	&\quad-
	\frac{\sqrt5\sqrt[3]{2\alpha^{11}}}{3}
	\left(\frac{\big(\sqrt[3]{2}+\sqrt[3]{\alpha^5}\big)\big(\sqrt[3]{2}-\sqrt[3]{\alpha^5}\big)^3}{(5\alpha+1)^3}
	\log\! \bigg(\frac{5\alpha^2}{\big(\sqrt[3]{\alpha^5}+\sqrt[3]{2}\big)^3}\bigg)\right.\\
	&\left.\quad+\frac{\big(\sqrt[3]{2\alpha^5}-1\big)\big(\sqrt[3]{2\alpha^5}+1\big)^3}{(\alpha-5)^3\alpha^{11}}\log\! \bigg(\frac{5\alpha^{3}}{\big(\sqrt[3]{2\alpha^5}  - 1\big)^3}\bigg)\!\right)\!,
		\end{align*}	
		\begin{align*}
	\sum_{k = 1}^\infty    \frac{\left(\frac{54}{25} \right)^{k}\!L_k }{k \binom{3k}k} &= \frac{328}{361}+\frac{10\sqrt{3}\sqrt[3]{2\alpha^{11}}}{3}
	\left(\frac{\sqrt[3]{\alpha^5}(\alpha^5+4)+\sqrt[3]{16}(\alpha^5+1)}{(\alpha^5-2)^3} \arctan\!\bigg(\frac{\sqrt3}{2\sqrt[3]{\alpha^5/2}-1}\bigg)\right.\\
	&\left.\quad+\,
	\alpha\frac{\sqrt[3]{16\alpha^{20}}+1-\sqrt[3]{16\alpha^5}\big(\sqrt[3]{4\alpha^{10}}+1\big)}{(2\alpha^5+1)^3}\arctan\!\bigg( \frac{\sqrt 3}{2\sqrt[3]{2\alpha^5}+1} \bigg)\!\right)\\
	&\quad-
	\frac{5\sqrt[3]{2\alpha^{11}}}{3}
	\left(\frac{\big(\sqrt[3]{2}+\sqrt[3]{\alpha^5}\big)\big(\sqrt[3]{2}-\sqrt[3]{\alpha^5}\big)^3}{(\alpha^5-2)^3}
	\log\! \bigg(\frac{5\alpha^2}{\big(\sqrt[3]{\alpha^5}+\sqrt[3]{2}\big)^3}\bigg)\right.\\
	&\left.\quad+\frac{\alpha\big(\sqrt[3]{2\alpha^5}-1\big)\big(\sqrt[3]{2\alpha^5}+1\big)^3}{(2\alpha^5+1)^3}\log\! \bigg(\frac{5\alpha^{3}}{\big(\sqrt[3]{2\alpha^5}  - 1\big)^3}\bigg)\!\right)\!.
	\end{align*}
\end{example}

\section{Concluding comments}

In this paper we presented new closed forms for some types of infinite series involving binomial coefficients $\binom{3n}{n}$. To prove our results, we applied some routine arguments, combining Batir's formula \eqref{Batir} with Binet's formulas for Fibonacci and Lucas numbers. Using similar techniques, we can establish series evaluations involving binomial coefficients $\binom{3n}{n}$ with Fibonacci and Lucas polynomials and other known number and polynomial sequences. 

Let us give, for example, a generalization of Theorems \ref{Th1} and \ref{Th4} to the case of the Horadam sequence defined by the recurrence 
$$W_n=pW_{n-1}-qW_{n-2},\quad n\geq2,$$ with initial values $W_0=a$ and $W_1=b$.

Let 
$$\Delta=\sqrt{p^2+4q},\,\,\, \alpha_*=(p+\Delta)/2,\,\,\, \beta_*=(p-\Delta)/2,\,\,\, A=b-a\beta_*,\,\,\, B=b-a\alpha_*.$$    

The following identities hold for positive integer $r$: 
\begin{align*}
\sum_{k = 1}^\infty  \frac{(-1)^{k(r-1)}}{k^2 \binom{3k}k}&\left(\frac{27AB q^r}{\Delta^2}\right)^{k}W^{-2k}_r \\
&= 
6\arctan^2\!\bigg(\frac{\sqrt3\sqrt[3]{Bq^r}}{2\sqrt[3]{A\alpha_*^{2r}}+\sqrt[3]{B(-q)^r}}\bigg)-\frac12
\log^2\! \bigg(\frac{\alpha_*^r\Delta W_r}{\big(\sqrt[3]{A\alpha_*^{2r}}-\sqrt[3]{B(-q)^r}\big)^3}\bigg)
\end{align*}
and
\begin{align*}
\frac{A\alpha_*^{2r}+B(-q)^r}{\sqrt[3]{AB\alpha_*^{2r}q^r}} &\sum_{k = 1}^\infty  \frac{(-1)^{k(r-1)}}{k \binom{3k}k}\left(\frac{27AB q^r}{\Delta^2}\right)^{k}W^{-2k}_r \\
&= 
2\sqrt3\big(\sqrt[3]{A\alpha_*^{2r}}-\sqrt[3]{B(-q)^{r}} \big)\arctan\!\bigg(\frac{\sqrt3\sqrt[3]{Bq^r}}{2\sqrt[3]{A\alpha_*^{2r}}+\sqrt[3]{B(-q)^r}}\bigg)\\
&\quad-(-1)^r
\big(\sqrt[3]{A\alpha_*^{2r}}+\sqrt[3]{B(-q)^{r}} \big)
\log\! \bigg(\frac{\alpha_*^r \Delta  W_r}{\big(\sqrt[3]{A\alpha_*^{2r}}-\sqrt[3]{B(-q)^r}\big)^3}\bigg).
\end{align*}

\section*{Acknowledgment}
We would like to thank Professor Wenchang Chu for drawing our attention to several papers related to the subject of our research.\\


\begin{thebibliography}{99}

\bibitem{batir03} N. Batir,  On the series $\sum_{k=1}^\infty{\binom{3k}k^{-1}k^{-n}x^k}$, {\it Proc.  Indian Acad. Sci. Math. Sci.} \textbf{115}(4) (2003), 371--381.

\bibitem{borwein05} J.~M. Borwein and R. Girgensohn,  Evaluations of binomial series, {\it Aequationes Math.} \textbf{70} (2005), 25--36.

\bibitem{chu} W.~Chu, Gosper-type sums with reciprocals of binomial coefficients of the form $\binom{3n+\varepsilon}{n}$, {\it J. Difference Equ. Appl.} {\bf28}(11-12) (2022), 1381--1404. 

\bibitem{DAurizio}
J.~D'Aurizio and S.~Di Trani, Surprising identities for the hypergeometric ${}_4F_3$ function, {\it Boll. Unione Mat. Ital.} {\bf 11} (2018), 403--409.  

\bibitem{koshy} T. Koshy, {\it  Fibonacci and Lucas Numbers with Applications}, Wiley-Interscience, 2001.

\bibitem{lehmer85} D.~H. Lehmer, Interesting series involving the central binomial coefficient, {\it Amer. Math. Monthly} \textbf{92}(7) (1985), 449--457.

\bibitem{vajda} S. Vajda, {\it Fibonacci and Lucas Numbers, and the Golden Section: Theory and Applications}, Dover Press, 2008.

	
\end{thebibliography}
\end{document}